\newtheorem{theorem}{Theorem}[section]
\newtheorem{lemma}[theorem]{Lemma}
\theoremstyle{definition}
\newtheorem{definition}[theorem]{Definition}
\newtheorem{question}[theorem]{Question}
\theoremstyle{remark}
\numberwithin{equation}{section}
\theoremstyle{definition}
\theoremstyle{remark}
\newcommand{\T}{\mathbb{T}}
\newcommand{\D}{{\rm Diam}}
\newcommand{\SU}{{\rm Sup}}
\numberwithin{equation}{section}
\begin{document}

\title{An anti-classification theorem for minimal homeomorphisms on the torus}


\author{Bo Peng}
\address{}
\curraddr{}
\email{}
\thanks{}


\date{}

\dedicatory{}

\begin{abstract}
We show that it is impossible to classify topological conjugacy relation of minimal homeomorphisms on the torus by countable structures. 
\end{abstract}

\maketitle


\bibliographystyle{amsplain}
\section{Introduction}

A \textbf{topological dynamical system} is a pair $(X,f)$ where $X$ is a compact metric space and $f$ is a homeomorphism. Let $(X,f)$ and  $(Y,g)$ be two topological dynamical systems, an \textbf{isomorphism} from $(X,f)$ to $(Y,g)$ is a homeomorphism $h: X\rightarrow Y$ such that $h\circ f=g\circ h$. Topological dynamics study properties which are preserved under isomorphism. When the space $X$ is fixed, the isomorphism relation for topological dynamical systems on $X$ is equivalent to the conjugacy relation on the group ${\rm Homeo}(X)$. We also use the terminology \textbf{topologically conjugacy} to replace isomorphism when studying homeomorphisms on a fixed compact metric space.

In 1961, Smale (\cite{ICM}, \cite{Smale}) suggested classifying smooth and topological dynamical systems on manifolds by topological conjugacy. That is, given a manifold $M$, can we determine whether two $C^r$-diffeomorphisms $(0\leq r\leq \infty$) are topologically conjugate. When $r=0$, the problem is the same as classifying homeomorphisms on $M$. Smale suggested classifying this equivalence relation on a given manifold by "numerical and algebraic invariants" (for more details regarding the background, see \cite{Foremangorodiff}).

Earlier results of classification problems regarding topological dynamical systems especially minimal topological dynamical systems can be traced back to the late 19th century. Let $f$ be a homeomorphism of the circle and $F$ be its lift to $\mathbb{R}$. Let $x\in \mathbb{R}$, then define
$$
\rho(F)={\rm lim}_{n} \frac{F^n(x)-x}{n}.
$$

Poincar\'{e} observed that this limit always exist and is independent of the choice of $x$. Also, for two different lifts of $f$, the difference of the corresponding limits is an integer. Thus, we can write $\rho(f)$ for the positive fractional part of $\rho(F)$ for any lift $F$ of $f$. This is a well-defined number in $[0,1)$, namely the \textbf{rotation number}. Poincar\'{e} got the following celebrating classification result:

\begin{theorem}
    {\rm(Poincar\'{e})} Suppose $f$ is a homeomorphism of the circle with a dense orbit, then $f$ is topologically conjugate with the rotation $R_{\rho(f)}$.
\end{theorem}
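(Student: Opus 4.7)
The plan is to construct the conjugating homeomorphism explicitly on the orbit of a chosen point and then extend by density, with the combinatorial structure of the orbit being controlled by the rotation number.

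First I would argue that a homeomorphism $f$ with a dense orbit must have irrational rotation number. If $\rho(f) = p/q$ were rational, then $F^q - p$ would have a fixed point on $\mathbb{R}$, so $f$ would have a periodic orbit; but a circle homeomorphism with a periodic point cannot have a dense orbit, since the periodic point's orbit is finite and its complement is $f$-invariant. So from now on $\alpha := \rho(f)$ is irrational and $\{n\alpha \pmod 1 : n \in \mathbb{Z}\}$ is dense in $S^1$.

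Next, fix any point $x_0 \in S^1$ whose orbit $O := \{f^n(x_0) : n \in \mathbb{Z}\}$ is dense (the hypothesis supplies one; in fact every orbit will be, though I would not need this yet). Define $h : O \to S^1$ by $h(f^n(x_0)) = n\alpha \pmod 1$. The crux is to prove the following combinatorial lemma: for all integers $m_1, m_2, m_3, m_4$, the cyclic order of $(f^{m_1}(x_0), f^{m_2}(x_0), f^{m_3}(x_0), f^{m_4}(x_0))$ on $S^1$ coincides with the cyclic order of $(m_1\alpha, m_2\alpha, m_3\alpha, m_4\alpha)$ on $S^1$. This is the technical heart of the proof, and I expect it to be the main obstacle. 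To prove it, I would pass to the lift $F$ and show that for any integers $p, q$ and any $y \in \mathbb{R}$, one has $F^q(y) - y > p$ iff $q\alpha > p$, and $F^q(y) - y < p$ iff $q\alpha < p$. This follows from writing $\rho(F) = \lim_n (F^{nq}(y) - y)/(nq)$ together with the monotonicity of $F^q$ and the fact that $F^q - p$ has no fixed points under strict inequality (which would give a periodic orbit of $f$, contradicting irrationality of $\alpha$).

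Granted the order-preserving property, $h : O \to \{n\alpha \pmod 1\}$ is a bijection between two dense subsets of $S^1$ that preserves cyclic order. Any such order-preserving bijection between dense subsets of $S^1$ extends uniquely to an order-preserving homeomorphism $\tilde h : S^1 \to S^1$ (via one-sided limits in each arc, together with density to rule out jumps). Finally I would verify the conjugacy identity on the dense set $O$:
\[
\tilde h(f(f^n(x_0))) = \tilde h(f^{n+1}(x_0)) = (n+1)\alpha = R_\alpha(n\alpha) = R_\alpha(\tilde h(f^n(x_0))),
\]
and then extend by continuity of both sides to conclude $\tilde h \circ f = R_\alpha \circ \tilde h$ on all of $S^1$. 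The main obstacle, as noted, is the order-preserving lemma; once that is in hand, the extension and verification are routine density-and-continuity arguments.
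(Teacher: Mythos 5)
The paper states this result of Poincar\'e as classical background and does not prove it, so there is no ``paper proof'' to compare against; your proposal is the standard textbook argument (order-preserving bijection on a dense orbit, extended by monotone limits), and its overall structure is correct. The key combinatorial step is handled properly: the sign comparison $F^q(y)-y-p$ versus $q\alpha-p$ does follow from the absence of fixed points of $F^q-p$ together with monotonicity, and once cyclic order is preserved, the extension of $h$ from the dense orbit to a circle homeomorphism and the verification of $\tilde h\circ f=R_\alpha\circ\tilde h$ by continuity are routine, exactly as you say. Note that density of the image set $\{n\alpha \bmod 1\}$ (from irrationality of $\alpha$) is what upgrades the extension from a monotone degree-one surjection to an actual homeomorphism; this is where the dense-orbit hypothesis is essential and where Denjoy counterexamples are excluded.

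One justification is too quick. To rule out rational rotation number you write that a homeomorphism with a periodic point cannot have a dense orbit ``since the periodic point's orbit is finite and its complement is $f$-invariant.'' Invariance of the complement of a finite set does not by itself preclude a dense orbit living in that complement. The correct argument: if $\rho(f)=p/q$ then $f^q$ has a nonempty fixed-point set $K$; on each complementary arc of $K$ the iterates $f^{nq}(y)$ form a monotone sequence converging to a point of $K$, so the closure of any $f$-orbit is contained in the orbit itself together with finitely many limit points in the periodic set, hence is not all of $S^1$. With that repair the proof is complete.
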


A topological dynamical system is \textbf{minimal} if every orbit is dense. Thus, Poincar\'{e}'s rotation number completely classified minimal homeomorphisms on the circle. This is a numerical invariant which satisfies Smale's suggestion.

A natural question is: Can we generalize Poincar\'{e}'s result to other manifolds, in particular, the torus.   In \cite{Jager}, J\"ager left the following comment:  

\textit{It seems natural attempt to generalize Poincar\'{e}'s result to higher dimensions, however, so far no result in this direction exist.}

Since Poincar\'{e}'s result was proved more than 100 years ago, it is natural to predict the impossibility of this generalization. In other words, it might be impossible to classify minimal homeomorphisms on the torus by any numerical or even algebraic invariants suggested by Smale and proved by Poncar\'{e} on the circle. To prove this impossibility, we need to describe this problem in a rigorous mathematical language.

In the late 20th century, a complexity theory was developed by descriptive set theorists (see \cite{Gaobook}, \cite{Kecbook}) and by applying this theory, we can obtain anti-classification results for equivalence relations.

Let $E$ and $F$ be two equivalence relations on Polish spaces $X$ and $Y$, respectively. A \textbf{Borel reduction} from $E$ to $F$ is a Borel function $f: X\rightarrow Y$ such that for any $x_1,x_2\in X$,
$$
x_1Ex_2\,\, \mbox{if and only if}\,\,f(x_1)Ff(x_2).
$$
We write $E\leq_B F$ if there is a Borel reduction from $E$ to $F$. A Borel reduction can be regarded as a "push-forward" of classification from $E$ to $F$. If $E\leq_B F$, classifying $F$ would be considered harder as classifying $E$. For more details regarding the motivation of this theory, like why we need to require Borelness, see \cite{Foremansurvey}.

A very important class of equivalence relations are called \textbf{orbit equivalence relations}. Let $G$ be a Polish group acting continuously on a Polish space $X$, let $E^X_G$ be an equivalence relation defined on $X$, two points $x,y\in X$ are equivalent if there exists $ g\in G$ such that $gx=y$. An equivalence relation on a Polish space $X$ is called an $\textbf{orbit equivalence relation}$ if it is defined as $E^X_G$ for some Polish group $G$.  A \textbf{complete orbit equivalence relation}  is an orbit equivalence relation such that all Polish group actions are Borel reducible to it. Affine homeomorphism relation of Choquet simplices and homeomorphism relation of compact metric spaces are complete orbit equivalence relations (see \cite{Sabok}, \cite{Zielinski}). 


Now we talk about three common benchmarks measuring the complexity of an equivalence relation. An equivalence relation $E$ is \textbf{concretely classifiable} if $E\leq_B =_{\mathbb{R}}$ where $=_{\mathbb{R}}$ denotes the equality relation on $\mathbb{R}$. We also say $E$ admits \textbf{numerical invariants} if $E$ is concretely classifiable. Poincar\'{e}'s rotation number is an example of numerical invariants. 

Let $S_{\infty}$ be the infinite permutation group. An equivalence relation is \textbf{classifiable by countable structures} if it is Borel reducible to an $S_{\infty}$ action. Since isomorphism of any countable algebraic structures could be regarded as an $S_{\infty}$ action (see \cite{Foremanreduction}), we say an equivalence relation $E$ admits \textbf{complete algebraic invariants} if $E$ is classifiable by countable structures. Topological conjugacy of Cantor systems is classifiable by countable structures (see \cite{CGaoCantorsystem}).

An equivalence relation $E$ on a Polish space $X$ could be regarded as a subset of the product space $X^2$. We say $E$ is \textbf{Borel equivalence relation} if $E$ is a Borel subset of $X^2$. If an equivalence relation is not Borel, then it is impossible to solve it with countably many information. 

A Borel equivalence relation is \textbf{countable} if every equivalent class is countable. Countable Borel equivalence relations are classifiable by countable structures. For symbolic subshifts, the topological conjugacy relation is countable, many results in this direction have been obtained (see \cite{bowen's}, \cite{MarcinToeplitz} and \cite{KayaToeplitz}). 

Many impossibility results regarding classifying dynamical systems have been proved in this direction, especially for isomorphism relation of ergodic measure-preserving transformations (EMPTs). Hjorth \cite{Hjorthturb} proved that the isomorphism relation of EMPTs is not classifiable by countable structures. Foreman and Weiss \cite{Foremanweiss} generalized this result to any generic class of ergodic measure-preserving transformations. A famous result of Foreman, Rudolph and Weiss \cite{NonBorelnessEMPT} proves the non-Borelness of the isomorphism relation of EMPTs. Recently, Foreman and Weiss \cite{SmoothMPTnonBorel} proved isomorphism relation of measure-preserving diffeomorphisms on the torus is not Borel.  Kunde \cite{WeakliymixingnonBorel} generalized this result to weakly mixing diffeomorphisms. Also, Gerber and Kunde \cite{Kakutaninonborel} proved non-Borelness for Kakutani equivalence of measure preserving diffeomorphisms on the torus.

For topological conjugacy, the break through anti-classification result is to Hjorth \cite{Hjorthbook} who proved that the topological conjugacy of homeomorphisms on the unit square is not classifiable by countable structures. Let $M$ be an $n$-dimensional manifold, Foreman and Gorodetski \cite{Foremangorodiff} proved that topological conjugacy relation of smooth diffeomorphisms on $M$ is not concretely classifiable if $n\geq 2$ and not Borel when $n\geq 5$, and the latter was recently generalized by Foreman and Gorodetski and independently Vejnar  to all manifolds.  In \cite{turbulentPeng}, the author proved that when $n\geq 2$, the topological conjugacy of diffeomorphism is not classifiable by countable structures.  However, all those results used fixed point in an essential way. Thus, Smale's classification program regarding topological minimal transformations, as a qualitative version of ergodicity, remains largely open. This question was also asked in \cite[Problem 2]{SmoothMPTnonBorel}.

\begin{question}
    {\rm (Foreman and Weiss) \cite[Problem 2]{SmoothMPTnonBorel}} \textit{ The problem of classifying diffeomorphisms of compact surfaces up to topological conjugacy remains largely open. Work of the first author with A. Gorodetski shows that the isomorphism relation itself is not Borel, but for a very specific type of diffeomorphisms of manifolds of dimension 5 and above. It is not know, for example for topologically minimal transformations.}
\end{question}

Note that Kunde \cite{Kundesmoothconjugacy} proved that the smooth conjugacy relation of orientation preserving $C^{\infty}$-diffeomorphisms on the unit circle is not concretely classifiable. Since all those diffeomorphisms are minimal due to the famous result of Denjoy, this shows smooth conjugacy behaves differently than topological conjugacy.  In \cite{BoLiminimal}, by applying a completely different technique, the author and Li was able to show the isomorphism relation of minimal Cantoriod systems is not classifiable by countable structures. However, Cantorids is a class of compact metric spaces which are very far from being a manifold and it is not clear whether the space can be fixed in the proof of \cite{BoLiminimal}. Based on this result, Vejnar \cite{Vejnar} asks that can we realize the complexity of minimal systems on a concrete topological space. It is worth mentioning that we still do not know whether the isomorphism relation of minimal dynamical systems in general is a complete orbit equivalence relation.  An exciting result of Deka, Kwietniak, Garc\'{i}a-Ramos and Kasprzak and Kunde \cite{DGKKK} shows that the conjugacy relation of Cantor minimal systems is not Borel.

In this paper, we show that the topological conjugacy of minimal homeomorphisms on the torus is not classifiable by countable structures. However, our construction uses $C^0$-diffeomorphisms. Still, the torus is the first known manifold, or even a compact metric space whose minimal homeomorphisms can have such a high complexity to classify.

\begin{theorem}
    The topological conjugacy relation of minimal homeomorphisms on the torus is not classifiable by countable structures.
\end{theorem}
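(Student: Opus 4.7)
The plan is to invoke Hjorth's turbulence theorem \cite{Hjorthbook}: to show that topological conjugacy on the Polish space $\mathrm{Homeo}_{\min}(\mathbb{T}^2)$ of minimal homeomorphisms of the torus is not classifiable by countable structures, it suffices to produce a Borel reduction from the orbit equivalence relation of a known turbulent Polish group action into it. For the source I would take a concrete turbulent action such as the translation action of a dense $F_\sigma$ subgroup of a Polish group, where turbulence is already on record.

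The core construction assigns Borelly to each parameter $p$ in the source space a minimal homeomorphism $T_p$ of $\mathbb{T}^2$, built via a topological analog of the Anosov--Katok approximation-by-conjugation scheme. Concretely, fix a Liouville irrational $\alpha$ with rapid rational approximations $\alpha_n$, and define $T_p$ as the uniform limit of $H_n(p) \circ R_{\alpha_n} \circ H_n(p)^{-1}$, where $H_n(p) = h_1(p) \circ \cdots \circ h_n(p)$ is built from homeomorphisms of $\mathbb{T}^2$ whose supports are concentrated at scales comparable to the periodic orbits of $R_{\alpha_n}$. The $h_n(p)$ are required to depend Borelly on finitely many coordinates of $p$. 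Two competing constraints must be balanced: each $h_n(p)$ must be small enough in the uniform metric, relative to the period of $R_{\alpha_n}$, to ensure that the Cauchy limit exists and is minimal; yet collectively the $h_n(p)$ must encode enough of $p$ to be recoverable from the dynamics of $T_p$ via intrinsic invariants such as a nested Rokhlin tower structure, the isomorphism class of the maximal equicontinuous factor, or the attendant cocycle class.

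The easy direction of the reduction, that equivalent parameters yield topologically conjugate homeomorphisms, should follow from the explicit intertwining conjugacies provided by the group action on parameters. The hard direction, and the principal obstacle, is rigidity: any topological conjugacy $\phi \circ T_p = T_q \circ \phi$ must force $p$ and $q$ into the same orbit. Unlike the constructions in \cite{turbulentPeng, Hjorthbook}, we cannot localize the argument around a fixed point, so the combinatorial data encoding $p$ has to be extracted from features genuinely intrinsic to a minimal system. A plausible route is to exploit return-time statistics or the tower skeleton of $T_p$ to show that any conjugacy $\phi$ must respect the approximating partitions at every stage, and then read off the parameter from these partitions via a back-and-forth argument. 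Striking the balance between inserting enough distinguishing information at each level of the Anosov--Katok tower and preserving minimality of the uniform limit is the technical heart of the argument and the main source of difficulty.
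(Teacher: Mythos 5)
Your overall strategy (reduce a known turbulent equivalence relation to topological conjugacy and invoke Hjorth's theorem) matches the paper, which reduces $c_0$ on $[0,1]^\omega$. But the proposal has a genuine gap exactly where you locate ``the technical heart'': you never supply a conjugacy-invariant that lets you recover the parameter from the limit system. The invariants you float would not work here. The maximal equicontinuous factor of every system in the paper's family is the \emph{same} fixed irrational rotation $T$ of $\mathbb{T}^2$ (each $S_\alpha$ is an almost one-to-one extension of $T$), so it distinguishes nothing; and return-time/tower statistics of almost automorphic extensions of a fixed rotation are likewise essentially identical across the family. Saying ``a plausible route is to exploit return-time statistics or the tower skeleton\dots via a back-and-forth argument'' is not a proof of the hard direction; it is a restatement of the problem.

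The paper's actual mechanism is different in both the construction and the rigidity argument, and it is worth seeing why. Instead of Anosov--Katok conjugation of rational rotations $R_{\alpha_n}$ with a Liouville rotation vector, the paper uses the Denjoy--Rees technique: it blows up the single orbit $(T^n z_0)$ of a fixed minimal rotation into vertical intervals, producing a semiconjugacy $\Phi_\alpha$ with $\Phi_\alpha \circ S_\alpha = T\circ \Phi_\alpha$ whose fibers are singletons except over that orbit, where they are intervals whose ``shape'' is governed by functions $h^\alpha_t$ encoding the coordinates $\alpha(k)$. The substitute for fixed points --- the step your proposal is missing --- is the asymptotic equivalence relation: the construction forces two distinct points to be asymptotic if and only if they lie in the same nontrivial fiber $\Phi_\alpha^{-1}(z_n)$, and all other pairs are non-proximal because the base is distal. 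Since asymptotic classes are preserved by any conjugacy, a conjugacy must carry nontrivial fibers to nontrivial fibers; one then reads $\alpha$ off, up to $c_0$, from the set of convergent subsequences of a transversal $(w_n)$ with $\Phi_\alpha(w_n)=z_{2n}$ (the relation $E_{\rm tt}$). Without an invariant of this kind, your reduction has no way to show that non-equivalent parameters give non-conjugate systems, and the Liouville/rational-approximation setup you propose offers no obvious replacement.
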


 We use the Approximation by Conjugation method developed by Rees in \cite{Rees}. The exact construction follows an earlier paper of Rees \cite{MaryReesnondistal}. Also, we will use the theory of turbulence developed by Hjorth in descriptive set theory to show the anti-classification. 

\subsection{Outline of the paper}
In Section \ref{construction}, we give the construction of our Borel reduction. In Section \ref{existlimit}, we show that why the systems we constructed are well-defined. Then in Section \ref{tt section}, we define an equivalence relation $E_{\rm tt}$ which is helpful for the proof. Finally, we prove that our construction is a Borel reduction from $c_0$ to the conjugacy of minimal homeomorphisms on the torus in Section \ref{endgame}.


\section{preliminaries}
 In this paper, we use $\T$ to denote the circle and $\T^2$ to denote the torus. Denote by ${\rm Homeo}(\mathbb{T}^2)$ the space of homeomorphisms on torus.
 
 Let $a,b\in \T^2$, we denote by $T_{a,b}$, the rotation map on $\T^2$:
$$
T_{a,b}(x,y)=(x+a,y+b).
$$
 It is well-known that $T_{a,b}$ is minimal if and only if $a,b$ are rationally independent, i.e., there does not exist rational numbers $q_1,q_2,q_3$ which are not all equal to $0$, such that $q_1a+q_2b+q_3=0$.


   We use $||\cdot||$ to denote the usual metric on $\T^2$. For a set $A\subset \T^2$, we use the following notation:  
   $$
  {\rm Diam}(A)= {\rm Sup}\{||x-y||, x,y \in A\}.
   $$
    $||\cdot||_{\infty}$ will be the metric on ${\rm Homeo}(\T^2)$, defined as:

    $$
    ||f-g||_{\infty}={\rm Sup}_{x\in \T^2}||f(x)-g(x)||.
    $$

Let $G$ be a Polish group acting continuously on a Polish space $X$, we say the action is \textbf{turbulent}, if

\begin{itemize}
    \item Every orbit is dense.
    \item Every orbit is meager.
    \item Every local $U$-$V$-orbit is somewhere dense.
\end{itemize}
    Local orbits are defined as follows. Let $x\in X$, $U$ be an open set in $X$ and $V$ be an open neighborhood of the identity in $G$. Define the \textbf{local $U$-$V$-orbit} $\mathcal{O}(x,U,V)$ as
    $$
    \{y\in U\mid \forall i\in \{1,2\dots,n\}\,\,g_i\in V\,\,y=g_1\circ g_2\dots g_n x\}.
    $$
    
   Hjorth's turbulence theorem \cite{Hjorthbook} states that all turbulent actions are not Borel reducible to any Borel $S_{\infty}$ action. Donote by $c_0$ the following equivalence relation defined on $\mathbb{R}^{\omega}$, for $\alpha,\beta\in \mathbb{R}^{\omega}$,
  $$
   \alpha\,c_0\,\beta \,\,\mbox{if and only if}\,\, \lim_{n}|\alpha(n)-\beta(n)|=0.
   $$
    It is well-known \cite[Theorem 6.2.2]{Kanoveibook} that $c_0$ is actually a turbulent group action and it is Borel bireducible with its restriction to $[0,1]^{\omega}$.

   We will show that $[0,1]^{\omega}/c_0$ is Borel reducible to topological conjugacy relation of the minimal homeomorphisms on the torus.
   
\subsection{How to avoid using fixed points?} Let $f\in {\rm Homeo}(X)$, two points $x,y\in X$ are \textbf{asymptotic} in $(X,f)$ if ${\rm lim}_{n}(f^nx,f^ny)=0$ as $n \rightarrow \infty$. It is routine to check that this is an equivalence relation which is preserved under conjugacy. In other words, if $h\in {\rm Homeo}(X)$ and $g=hfh^{-1}$, then $x,y$ are asymptotic in $(X,f)$ if and only if $h(x)$ and $h(y)$ are asymptotic in $(Y,g)$. By \textbf{asymptotic class}, we mean the equivalence class of the asymptotic equivalence relation.

We want to send an element $\alpha\in [0,1]^{\omega}$ to a minimal homeomorphism $f_{\alpha}$ on $\T^2$ such that this is a Borel reduction.

By adding conditions to the Approximation by Conjugation method, we end up with a minimal homeomorphism $f_{\alpha}$ with the property that for any $x\in \T^2$, the number of elements in its asymptotic class is either one or continuum. Since asymptotic classes are preserved under conjugacy, if two homeomorphisms are conjugate, the points with asymptotic class of size continuum will be mapped to points with asymptotic class of size continuum. Then we can let those points play the same role as fixed points played in Hjorth's proof \cite[Theorem 4.17]{Hjorthbook} or the author's proofs in \cite{turbulentPeng}.

\section{The construction of the Borel reduction} \label{construction}
 In this section, we follow notation of Mary Rees \cite{MaryReesnondistal}. First, we fix a minimal rotation of $\T^2$, denoted by $T$,  
 
    $$
    T(x,y)=(x+a,y+b).
    $$
    
    Through the whole paper, this $T$ will be fixed. We will also fix an orbit $(T^n(z_0))_{n\in \mathbb{Z}}$ for some $z_0\in \T^2$. For $n\geq 0$, let 
    $$
    z_{2n}=(x_{2n},y_{2n})=T^{n}z_0
    $$ 
    and 
    $$
      z_{2n+1}=(x_{2n+1},y_{2n+1})=T^{-n-1}z_0.
    $$
    See the following picture:

    \begin{center}
    \begin{tikzpicture}
        \filldraw (0,0) circle (2pt);
        \filldraw (1,0) circle (2pt);
        \filldraw (2,0) circle (2pt);
        \filldraw (3,0) circle (2pt);
        \filldraw (4,0) circle (2pt);

        \node at (-1,0) {$\cdots$};

        \node at (5,0) {$\cdots$};
        
        \node[below] at (0,0) {$z_3$};
        \node[below] at (1,0) {$z_1$};
        \node[below] at (2,0) {$z_0$};
        \node[below] at (3,0) {$z_2$};
        \node[below] at (4,0) {$z_4$};
     \draw[->] (0,0) -- (1,0) node[midway, above] {\small $T$} node[midway] {$>$};
        \draw[->] (1,0) -- (2,0) node[midway, above] {\small $T$} node[midway] {$>$};
        \draw[->] (2,0) -- (3,0) node[midway, above] {\small $T$} node[midway] {$>$};
        \draw[->] (3,0) -- (4,0) node[midway, above] {\small $T$} node[midway] {$>$};
    \end{tikzpicture}
\end{center}

\maketitle

    Now we define two bijections $\sigma$ and $\tau$ from $\mathbb{N}$ to $\mathbb{N}$ such that $Tz_n=z_{\sigma(n)}$ and $T^{-1}z_n=z_{\tau(n)}$. Of course,  we have $0<|\sigma(n)-n|\leq 2$ and $0<|\tau(n)-n|\leq 2$.


    
    For any $\alpha\in [0,1]^{\omega}$, we will define $S_{\alpha}\in {\rm{Homeo}(\T^2)}$ and a continuous surjection $\Phi_{\alpha}: \T^2 \rightarrow  \T^2$ such that 
    $$
          \Phi_\alpha \circ S_\alpha = T\circ \Phi_\alpha.
    $$

    Instead of directly defining $S_{\alpha}$, we will define $\Phi_{\alpha}$ as a self-homeomorphism on a $T$-invariant comeager set $X=(\T-\{x_n\}_n)\times \T$ of $\T^2$, then we extend $\Phi^{-1}_{\alpha}T\Phi_{\alpha}$ to the whole $\T^2$.  In order to make the extension well-defined, we want $\Phi_{\alpha}$ to satisfy the following conditions:

    \begin{itemize}
        \item [(A)]$\Phi_{\alpha}$ is a continuous onto map of the form $\Phi_{\alpha}(x,y)=(x,\phi_{\alpha}(x,y))$ where $\phi_{\alpha}$ is a continuous map from $\T^2$ to $\T$.
        \item [(B)]For any $z\in \T^2$, we have $\Phi_{\alpha}^{-1}(\{z\})$ is not a singleton if and only if $z=z_n$ for some $n\in \mathbb{N}$ and 
        $$
          \Phi_{\alpha}^{-1}(\{z_n\})=\{x_n\}\times I_n
        $$
        where $I_n$ is an interval.
         
       \item [(C)] $\Phi_{\alpha}^{-1}\circ T \circ\Phi_{\alpha}$ and  $\Phi_{\alpha}^{-1}\circ T^{-1}\circ\Phi_{\alpha}$ are uniformly continuous on $X$.
    \end{itemize}

    The condition (C) guarantees that $S_{\alpha}$ could be a well-defined self-homeomorphism of the torus. Indeed, if we can find such a continuous function $\Phi_{\alpha}$, we can take $S_{\alpha}$ to be the unique extension of $\Phi_{\alpha}^{-1}\circ T \circ\Phi_{\alpha}$. 

    \begin{lemma}\label{R1 works}
        The system $(\T^2,S_{\alpha})$ is minimal.
    \end{lemma}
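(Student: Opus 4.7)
The plan is to exploit the semi-conjugacy identity $\Phi_\alpha \circ S_\alpha = T \circ \Phi_\alpha$, together with the fibre structure in condition (B), in order to pull the minimality of $T$ back to $S_\alpha$.

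Fix an arbitrary $p \in \T^2$ and set $M := \overline{\{S_\alpha^n(p) : n \in \mathbb{Z}\}}$, the $S_\alpha$-orbit closure of $p$. Then $M$ is compact, nonempty, and $S_\alpha$-invariant, so by continuity of $\Phi_\alpha$ the image $\Phi_\alpha(M)$ is a compact, nonempty, $T$-invariant subset of $\T^2$. Since $T$ was chosen to be a minimal rotation of the torus, this forces $\Phi_\alpha(M) = \T^2$.

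Now I invoke condition (B). For every $z \in \T^2 \setminus \{z_n : n \in \mathbb{N}\}$ the fibre $\Phi_\alpha^{-1}(\{z\})$ is a singleton, and since $z \in \Phi_\alpha(M)$ that singleton must belong to $M$. Therefore
$$
M \;\supseteq\; \T^2 \setminus \bigcup_{n \in \mathbb{N}} \bigl(\{x_n\} \times I_n\bigr).
$$
The excluded set is a countable union of compact vertical arcs, each of which is nowhere dense in $\T^2$, so by the Baire category theorem its complement is dense. Hence $M$ is a closed subset of $\T^2$ containing a dense set, which forces $M = \T^2$. As $p$ was arbitrary, every $S_\alpha$-orbit is dense, i.e. $(\T^2, S_\alpha)$ is minimal.

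The only point that requires any care is the step $\Phi_\alpha(M) = \T^2$, which implicitly uses that $\Phi_\alpha$ and $S_\alpha$ are genuinely defined on all of $\T^2$; that is precisely what the uniform continuity in condition (C) buys us, since it guarantees that $\Phi_\alpha^{-1} \circ T \circ \Phi_\alpha$, a priori defined only on the comeager set $X$, extends continuously to the entire torus. I do not anticipate any deeper obstacle for this lemma itself: the substantive work of the paper lies in constructing an explicit $\Phi_\alpha$ satisfying (A)--(C), not in deducing minimality from those properties.
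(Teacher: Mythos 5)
Your proof is correct and follows essentially the same route as the paper: both arguments push an $S_\alpha$-invariant closed set forward through the factor map $\Phi_\alpha$, use minimality of the rotation $T$ to see the image is all of $\T^2$, and then use the singleton-fibre property (B) to conclude the invariant set contains the dense complement of the countably many arcs $\{x_n\}\times I_n$. Your write-up is in fact somewhat cleaner than the paper's contradiction argument, but there is no substantive difference in method.
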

    \begin{proof}
       We will argue by contradiction. Let $(Y,S_{\alpha})$ be an invariant subsystem of $(\T^2,S_{\alpha})$. Since the rotation $(\T^2,T)$ is minimal and $\Phi_{\alpha}$ is a factor map from $(\T^2,S_\alpha)$ to $(\T^2,T)$, we know that $\Phi_{\alpha}(Y)=\T^2$. But by condition (2), we have that $\Phi_{\alpha}^{-1}(\{y\})$ is a singleton, except for $y=z_n$, $n\in \mathbb{N}$. Take $y$ to be any point which is not on the $T$-orbit of $z_0$ such that $\Phi^{-1}_{\alpha}(\{y\})\in Y$. Then we have $\{T^n(y)\}_{n\in \mathbb{Z}}$ is dense in $\T^2$ but $\Phi^{-1}_{\alpha}(\{T^ny\}_{n\in \mathbb{Z}})\subset Y$ is not dense. This contradicts the fact that $\Phi_{\alpha}$ is a surjection.
    \end{proof}

   For each $\alpha\in \mathbb{R}^{\omega}$,  we need to construct $\Phi_{\alpha}$ satisfying conditions (A)-(C). $\Phi_{\alpha}$ will be defined as a uniform limit of $\Phi_{\alpha,n}$ for a sequence of homeomorphisms $\Phi_{\alpha,n}\in {\rm Homeo}(\T^2)$. Each $\Phi_{\alpha,n}$ will have the form $\Phi_{\alpha,n}=\Lambda_{\alpha,n}\circ \Lambda_{\alpha,n-1}\circ \cdots \circ \Lambda_{\alpha,1}$ where for each $k\leq n$, the function $\Lambda_{\alpha,k}$ is a continuous function from $\T^2$ to $\T^2$ such that, 
    \begin{enumerate}
        \item $\Lambda_{\alpha,n}$ is the identity out of $A_n$, where $A_n$ is of the form $\{(x_n+\delta,y_n+\delta)|\,\, -\delta_{\alpha,n}\leq \delta\leq \delta_{\alpha,n}\}$. In other words, $A_n$ is a square centered at $z_n$ with parameter $2\delta_{\alpha,n}$. 

        \begin{center}
\begin{tikzpicture}
    \draw[thick] (0,0) rectangle (7,7);
    
    \foreach \x/\y/\size/\label in {
        3/3/1.8/$z_1$,      
        5/5/1.5/$z_2$,
        1/5/1.2/$z_3$,
        5/1/1.0/$z_4$,
        1/1/0.5/$z_5$,
        5/3/0.4/$z_6$,
        3.2/2.5/0.2/$z_7$       
    } {
        \pgfmathsetmacro{\half}{\size/2}
        \draw[thick] (\x - \half, \y - \half) rectangle (\x + \half, \y + \half);
        \filldraw[black] (\x,\y) circle (1.5pt);
        \node[above right] at (\x, \y) {\label};
    }
   
\end{tikzpicture}

   Construction of the squares $A_{\alpha,n}$
\end{center}
        \item  $\Lambda_n(A_n)=A_n$ and $\Lambda_{\alpha,n}$ is of the form $\Lambda_{\alpha,n}(x,y)=(x,\lambda_{\alpha,n}(x,y))$ where $\lambda_{\alpha,n}$ is a continuous function from $\T^2$ to $\T$. Also, $(\Lambda_{\alpha,n})^{-1}(\{(x,y)\})$ is singleton except when $(x,y)=z_n$ and 
        $$
         (\Lambda_{\alpha,n})^{-1}(z_n)=\{x_n\}\times [y_n-\frac{\delta_{\alpha,n}}{2},y_n+\frac{\delta_{\alpha,n}}{2}].
        $$
        \item $(\Lambda_{\alpha,\sigma(n)})^{-1}\circ T\circ \Lambda_{\alpha,n}$ and $(\Lambda_{\alpha,\tau(n)})^{-1}\circ T^{-1}\circ \Lambda_{\alpha,n}$ are uniformly continuous on $\T^2-\{z_m\}_{m\in \mathbb{N}}$.
        \item When $|n-m|\leq 5$, the squares $A_m$ and $A_n$ are disjoint, hence, $\Lambda_{\alpha,n}$ and $\Lambda_{\alpha,m}$ are commute.
        \item $\delta_{\alpha,n}<1/2^n$ and is small enough so that 
        \begin{itemize}
        
            \item  $\{x_m\} \times \T $ is disjoint with  $A_{n}$ for all $m<n$.
            \item for every $n$ and for all $m<n$
        $$
         \delta_{\alpha,n}<\frac{||x_m-x_n||}{n}.
        $$
            \item for $\Phi_{\alpha,n-1}$ and $x,y\in \{x_n\}\times \T$:
        $$
        ||\Phi_{\alpha,n-1}(x),\Phi_{\alpha,n-1}(y)||\leq \delta_{\alpha,n} \Rightarrow ||x-y||<1/n.
        $$
        \end{itemize}
 
        \item There exists a sequence $\{a_n\}_{n\in \mathbb{N}}$ of integers $\geq2$ with $\prod^{\infty}_{n=1}(1-1/a_n)>0$ and if $x,y\in \T^2$ with $|x-y|\geq1/n$, then for $m>n$, we have
        $$
         ||\Phi_{\alpha,m}(x)-\Phi_{\alpha,m}(y)||\geq \Pi_{r=n+1}^{m}(1-1/a_r)||\Phi_{\alpha,n}(x)-\Phi_{\alpha,n}(y)||.
        $$
        \item Let $\Phi^{\sigma}_{\alpha,n}=\Lambda_{\alpha,\sigma(n)}\circ\cdots\circ\Lambda_{\alpha,\sigma(1)}$ and $\Phi^{\tau}_{\alpha,n}=\Lambda_{\alpha,\tau(n)}\circ\cdots\circ \Lambda_{\alpha,\tau(1)}$. And the sequence of functions $\{(\Phi^{\sigma}_{\alpha,n})^{-1}\circ T\circ\Phi_{\alpha,n}\}_n$ and $\{(\Phi^{\tau}_{\alpha,n})^{-1}\circ T^{-1}\circ\Phi_{\alpha,n}\}_n$ are uniformly continuous on $X$ and those two sequences of functions are uniformly Cauchy on $X$.

        

        \end{enumerate}
         
         Note that 
         $$
         ||\Phi_{\alpha,n}-\Phi_{\alpha,n-1}||_{\infty}=||\Lambda_{\alpha,n}-id||_{\infty}<\delta_{\alpha,n}.$$
         
         So $\{\Phi_{\alpha,n}\}$ converges to a continuous map by (5), denote this map by  $\Phi_{\alpha}$. 

      Since we list out a lot of conditions, we give a brief explanation on how those conditions work. Conditions (5) is a numerical condition which will be used to justify that our function is a Borel reduction.  The rest of the conditions are the same as conditions in Rees's paper \cite{MaryReesnondistal}, the purpose of having those conditions is to make sure that the limit $\Phi_{\alpha}={\rm lim}_n \Phi_{\alpha,n}$ exists and $S_{\alpha}$ is well-defined. 

       To define $\Phi_{\alpha,n}$, we need 
  to first define a continuous function $h^{\alpha}:[-1,1]^2\rightarrow [-1,1]$ where we write $h^{\alpha}_t(s)$ for $h^{\alpha}(t,s)$. We first list out some properties $h^{\alpha}$ will have:

   \begin{itemize}
       \item[(a)]$h^{\alpha}_t$ is a homeomorphism for $t\neq 0$.
       \item[(b)] $h^{\alpha}_1=h^{\alpha}_{-1}={\rm Id}$, $h^{\alpha}_t(1)=1$, $h^{\alpha}_t(-1)=-1$ for all $t$.
       \item[(c)] $h^{\alpha}_t([-1/2,1/2])=[-|t|/2,|t|/2]$ for all $t\neq 0$. And
       $$
       (h^{\alpha}_0)^{-1}(\{0\})=[-1/2,1/2].
       $$
       \item[(d)] $h_0$ is onto and $(h^{\alpha}_0)^{-1}(\{t\})$ is a singleton for all $t\neq0$.
       \item[(e)] $h^{\alpha}_t=h^{\alpha}_0$ on $(h^{\alpha}_0)^{-1}([-1,-t]\cup[t,1])$.
    \end{itemize}


 Now we define $\Lambda_{\alpha,n}$. In the square $A_{\alpha,n}$, we can write all points in $A_{\alpha,n}$ as $(z+t,z+s)$ where $-\delta_n\leq t,s \leq \delta_n$. We define 
 $$
 \Lambda_{\alpha,n}(z_n+t,z_n+s)=(z_n+t,z_n+h^{\alpha}_t(h^{\alpha}_{\delta_{\alpha,n}})^{-1}(s)).
 $$ 

By the construction above, the function $h^{\alpha}$ completely determines $\Lambda_{\alpha,n}$, $\Phi_{\alpha}$ thus $S_{\alpha}$. All is left is to construct $h^{\alpha}$.

\subsection{Construction of $h^{\alpha}$}

 First we fix a sequence of decreasing numbers $0<\epsilon_n\leq 1$ converging to $0$ with $\epsilon_0=1$.

    We stipulate $h^\alpha$ to have the property that $h^{\alpha}_t=h^{\alpha}_{-t}$. In this way, we only need to define $h^{\alpha}(t,s)$ for all $t\geq0$.

   Let $H_n=[\epsilon_n,-\epsilon_{n+1})\times [-1,1]$.
   
       For each $k\in \mathbb{N}$, we divide the rectangle $H_{k}$ into four parts $H^1_{k},H^2_{k},H^3_{k}$ and $H^4_{k}$ where
       $$
       H^i_{k}=[d^i_{k},e^i_{k}]\times[-1,1]\,\,i\in\{1,2,3,4\}
       $$
      where
      $$
        \epsilon_{k+1}=d^1_{k}<e^1_{k}=d^2_{k}<\cdots<e^4_{k}=\epsilon_{k}.
        $$

   \begin{center}

\begin{tikzpicture}
    \draw[thick] (0, 0) rectangle (2, 4);

    \coordinate (A) at (0, 4);
    \coordinate (B) at (0.5, 4);
    \coordinate (C) at (1.0, 4);
    \coordinate (D) at (2, 4);
    \coordinate (E) at (1.5,4);

    \coordinate (P) at (0, 0);
    \coordinate (Q) at (0.5, 0);
    \coordinate (R) at (1, 0);
    \coordinate (S) at (2, 0);
    \coordinate (T) at (1.5,0);

    \draw[thick] (B) -- (Q);
    \draw[thick] (C) -- (R);
    \draw[thick] (A) -- (P);
    \draw[thick] (D) -- (S);
    \draw[thick] (E) -- (T);

    \filldraw (A) circle (2pt);
    \filldraw (B) circle (2pt);
    \filldraw (C) circle (2pt);
    \filldraw (D) circle (2pt);
    \filldraw (E) circle (2pt);
    \filldraw (P) circle (2pt);
    \filldraw (Q) circle (2pt);
    \filldraw (R) circle (2pt);
    \filldraw (S) circle (2pt);
    \filldraw (T) circle (2pt);
   
  
    \node at (S) [below right] {$\epsilon_{k}$};
    \node at (0,0) [below left] {$\epsilon_{k+1}$};
   
\end{tikzpicture}

The division
\end{center}

    All those partitions are fixed from now on. Before we define exactly what $h_{\alpha}$ is, we make some comment on the whole structure. 

 $\alpha(k)$ will determine the function $h^{\alpha}$ defined on $H_k$. And we will make sure that for any $\beta \in [0,1]^{\omega}$ with the property that $\alpha c_0 \beta$ and for any $\epsilon>0$, one can find $\delta>0$ such that for any $t\in [-\delta,0)\cup(0,\delta]$, 
    $$
     ||(h^\alpha_t)^{-1}-(h^{\beta}_t)^{-1}||_{\infty}<\epsilon.
    $$
     First, we define $h^{\alpha}_0$ to a polyline connecting 
     $$
    (-1,-1),(-1/2,0),(1/2,0),(1,1).
     $$
         

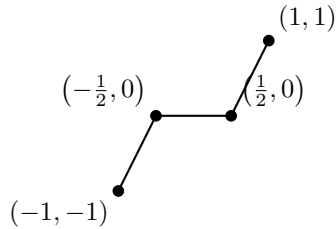
\begin{figure}[h!]
    \centering
    \begin{tikzpicture}
        \draw[thick] (-1, -1) -- (-1/2, 0);
        \draw[thick] (-1/2,0) -- (1/2,0);
        \draw[thick] (1/2,0) -- (1,1);
        \filldraw (-1, -1) circle (2pt);
        \node at (-1, -1) [below left] {$\left( -1, -1 \right)$};

        \filldraw (-1/2, 0) circle (2pt);
        \node at (-1/2, 0) [above left] {$\left( -\frac{1}{2}, 0 \right)$};

        \filldraw (1/2, 0) circle (2pt);
        \node at (1/2, 0) [above right] {$\left( \frac{1}{2}, 0 \right)$};

        \filldraw (1, 1) circle (2pt);
        \node at (1, 1) [above right] {$\left( 1, 1 \right)$};
    \end{tikzpicture}
    \caption{The function $h^\alpha_0$}
\end{figure}

     Note that we have $h^{\alpha}_0=h^{\beta}_0$ for all $\beta\in [0,1]^{\omega}$. For all $t\in (0,1]$, our function $h_{t}^{\alpha}$ will be a polyline connecting 
    $$
     (-1,-1),((-1-t)/2,-t),(-1/2,-t/2),(-p_{\alpha}(t),t/4),(1/2.t/2),((1+t)/2,t),(1,1)
    $$
    where $p_{\alpha}(t)$ is a continuous function.
    
    \begin{center} 
    \begin{tikzpicture}
        \draw[thick] (-2,-3) -- (-1,-1) --(-0.5,-0.5)-- (-0.3,0.2) -- (0.5,0.5)--(1,1)--(2,3);
        
        \fill[black] (-2,-3) circle (2pt);
        \fill[black] (-1,-1) circle (2pt);
        \fill[black] (-0.5,-0.5) circle (2pt);
        \fill[black] (-0.3,0.2) circle (2pt);
        \fill[black] (0.5,0.5) circle (2pt);
        \fill[black]  (1,1) circle (2pt);
        \fill[black]  (2,3) circle (2pt);
        
           \node[left] at (-2,-3) {$(-1,-1)$};
           \node[left] at (-1,-1) {$(\frac{-t-1}{2},-t)$};
           \node[left] at (-0.3,0.2) {$(-p_{\alpha}(t),t/4)$};
           \node[right] at (-0.5,-0.5)  {$(-\frac{1}{2},-\frac{t}{2})$};
           \node[right]  at (0.5,0.5){$(\frac{1}{2},\frac{t}{2})$};
           \node[right]  at (1,1) {$(\frac{1+t}{2},t)$};
           \node[right]  at (2,3)  {$(1,1)$};

        \node[below] at (0,-3) {The function $h_t^{\alpha}$};
    \end{tikzpicture}
\end{center}

    Now we define $h_\alpha$ on $H_k$ since $h_\alpha$ is symmetric, the other part is automatically defined. From now on, we will fix a dense subset $\{q_n\}_{n\in \mathbb{N}}$ of $[0,1]$.

    Note that the graph of the function $h^{\alpha}$ is a polyline, thus, the point $(-p_{\alpha}(t),t/4)$ determines the function. If we define $p_{\alpha}(t)$ in a continuous way with respect to $t$ in $H_k$, then the function defined on $H_k$ will be continuous.

    On $H_1^0$, since we know $h_{1}^{\alpha}$ must be identity, we first set $p_{\alpha}(t)$ to be $\alpha(0)/4$ for $t\in[d^3_0,e^3_0]$, and let $\mathcal{V}_0$ to be the orientation preserving affine homeomorphism from $[1,e^3_0]$ to $[1/4,\alpha(0)/4]$ and set $p_{\alpha}(t)=\mathcal{V}_0(t)$ for all $t\in [1,e^3_0]$.

    Now set $p_{\alpha}(t)=q_0/4$ for $t\in [d^1_0,e^1_0]$ and let $\mathcal{W}_0$ be the affine  homeomorphism from $[e^2_0,e^1_0]$ to $[\alpha(0)/4, q_0/4]$. Take $p_{\alpha}(t)$ to be $\mathcal{W}_0(t)$ for all $t\in [e^2_0,e^1_0]$.

    In the $k^{\rm th}$ step, first, we define $p_{\alpha}(t)$ for $t\in [d^1_k,e^1_k]$ and $[d^3_k,e^3_k]$. 

    For $k\geq 1$, let $p_{\alpha}(t)=\alpha(k)/4$ for $t\in [d^1_k,e^1_k]$ and $p_{\alpha}(t)=q_k/4$ for $t\in [d^3_k,e^3_k]$. Now let $\mathcal{V}_k$ be the orientation preserving affine homeomorphism from $[\epsilon_k, e^3_k]$ to $[q_{k-1}/4,\alpha(k)/4]$. And let $\mathcal{W}_k$ be the orientation preserving affine homeomorphism from $[d^2_k,e^2_k]$ to $[\alpha(k)/4,q_k/4]$. Let $p_{\alpha}(t)=\mathcal{V}_k(t)$ for $t\in [\epsilon_k, e^3_k]$ and $p_{\alpha}(t)=\mathcal{W}_k(t)$ for $t\in [d^2_k,e^2_k]$.

    It is routine to check that $h^{\alpha}$ satisfies all conditions (1)-(5).

\section{Existence of the limit} \label{existlimit}
  In this section, we show that the limit $\Phi_{\alpha}$ exists for all $\alpha\in [0,1]^{\omega}$. Note that this proof is extracted from \cite{MaryReesnondistal} to make this paper self-contained. This technique is also called the \textbf{Denjoy-Rees} technique. 

In this section, we fix an $\alpha\in [0,1]^{\omega}$. Clearly, there exists a sequence of decreasing positive reals $\{\epsilon_{1,n}\}_n$ such that conditions (4),(5) are satisfied when $\delta_{\alpha,n}<\epsilon_{1,n}$.

  \begin{lemma} \label{uniformcont}
    Fix $m\in \mathbb{N}$. Let $R:\T^2 \rightarrow \T^2$ be defined as 
      $$
       R(z_m+x)=z_n+x\,\,\,x\in \T^2.
      $$
      then $\Lambda_{\alpha,n}^{-1}\circ R \circ \Lambda_{\alpha,m}$ is uniformly continuous on $\T^2\setminus\{z_m\}$.
  \end{lemma}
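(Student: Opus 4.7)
The plan is to show that $F:=\Lambda_{\alpha,n}^{-1}\circ R\circ\Lambda_{\alpha,m}$, interpreted via the natural single-valued continuous extension across the collapsed interval of $\Lambda_{\alpha,m}$, in fact extends to a continuous function on all of $\T^2$. Since $\T^2$ is compact, continuity there is equivalent to uniform continuity, and in particular the restriction of $F$ to $\T^2\setminus\{z_m\}$ will be uniformly continuous.

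Away from the collapsed interval $I_m:=\{x_m\}\times[y_m-\delta_{\alpha,m}/2,\,y_m+\delta_{\alpha,m}/2]$, continuity is immediate from composition: $\Lambda_{\alpha,m}(z)\neq z_m$ forces $R(\Lambda_{\alpha,m}(z))\neq z_n$, and $\Lambda_{\alpha,n}^{-1}$ is single-valued and continuous at this point. The content of the lemma is therefore to handle a neighborhood of $I_m$, and in particular the point $z_m$. I would compute $F$ explicitly inside $A_m$. For $z=z_m+(t,s)\in A_m$ with $t\neq 0$, set $u:=(h^\alpha_{\delta_{\alpha,m}})^{-1}(s)$. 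Condition (2) gives $\Lambda_{\alpha,m}(z)=z_m+(t,h^\alpha_t(u))$, hence $R\Lambda_{\alpha,m}(z)=z_n+(t,h^\alpha_t(u))$, and inverting the analogous formula for $\Lambda_{\alpha,n}$ (using injectivity of $h^\alpha_t$ for $t\neq 0$ from property (a)) one obtains
\[
F(z)=z_n+\Bigl(t,\;h^\alpha_{\delta_{\alpha,n}}\bigl((h^\alpha_t)^{-1}(h^\alpha_t(u))\bigr)\Bigr)=z_n+\bigl(t,\;h^\alpha_{\delta_{\alpha,n}}\circ(h^\alpha_{\delta_{\alpha,m}})^{-1}(s)\bigr).
\]
The crucial observation is that the $h^\alpha_t$ factors cancel, so the right-hand side depends on $t$ only through the first coordinate and is jointly continuous in $(t,s)$.

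Consequently the formula extends continuously across all of $A_m$, including $I_m$ and $z_m$ itself. By property (c), the value assigned at any $z\in I_m$ lies in the collapsed interval of $\Lambda_{\alpha,n}$ at $z_n$, and one verifies that $\Lambda_{\alpha,n}$ applied to it returns $z_n$, so the choice is a genuine element of $\Lambda_{\alpha,n}^{-1}(z_n)$. Agreement with the outside definition on $\partial A_m$ is then routine: by condition (1), $\Lambda_{\alpha,m}=\mathrm{id}$ on $\partial A_m$, and substituting $t=\pm\delta_{\alpha,m}$ into the inverse formula for $\Lambda_{\alpha,n}^{-1}$ reproduces the inside formula above; any crossing of $\partial A_n$ by the intermediate point is handled by properties (b) and (e), under which the inverse formula for $\Lambda_{\alpha,n}^{-1}$ matches the identity. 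The main obstacle is that $h^\alpha_t$ oscillates as $t\to 0$ because $p_\alpha(t)$ does, which a priori would prevent $F$ from possessing a limit at $z_m$; this is precisely defused by the cancellation of the $h^\alpha_t$ factors exhibited above, and is the heart of the argument.
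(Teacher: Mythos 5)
Your computation is exactly the paper's: both proofs write $\Lambda_{\alpha,n}^{-1}\circ R\circ\Lambda_{\alpha,m}$ explicitly on $A_{\alpha,m}$ and observe that the $h^\alpha_t$ factors cancel, leaving $z_n+\bigl(t,\,h^\alpha_{\delta_{\alpha,n}}\circ(h^\alpha_{\delta_{\alpha,m}})^{-1}(s)\bigr)$, which is independent of $t$ in the second coordinate and hence uniformly continuous; the paper then splits the domain into $A_{\alpha,m}\cap R^{-1}(A_{\alpha,n})$ and its complement rather than invoking a continuous extension to the compact torus, but this is only a cosmetic difference. The proposal is correct and essentially the same argument.
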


  \begin{proof}

      It is enough to show that our function is uniformly continuous on $A_{\alpha,m}\cap R^{-1}(A_{\alpha,n})\setminus \{z_m\}$. Indeed, on the complement of the interior of $ A_{\alpha,m}\cap R^{-1}(A_{\alpha,n})\setminus \{z_m\}$, the function $\Lambda_{\alpha,n}^{-1}\circ R \circ \Lambda_{\alpha,m}$ is a well-defined continuous function defined on a compact set, hence uniformly continuous.

      We may assume that $z_m+x$ is in $A_{\alpha,m}\cap R^{-1}(A_{\alpha,n})\setminus \{z_m\}$. Let $x=(x_1,x_2)$, by the definition of $\Lambda_{\alpha,n}$, we have
      $$
   \Lambda_{\alpha,n}^{-1}\circ R \circ \Lambda_{\alpha,m}(z_m+x)=\Lambda_{\alpha,n}^{-1}(z_{n}+(x_1,h^{\alpha}_{x_1}\circ (h^{\alpha}_{\delta_{\alpha,m}})^{-1}(x_2))=
      $$
      $$
       z_n+(x_1,h^{\alpha}_{\delta_{\alpha,n}}\circ h^{\alpha}_{x_1}\circ (h^{\alpha}_{x_1})^{-1} \circ (h^{\alpha}_{\delta_{\alpha,m}})^{-1}(x_2))=z_n+(x_1,h^{\alpha}_{\delta_{\alpha,n}}\circ (h^{\alpha}_{\delta_{\alpha,m}})^{-1}(x_2)).
      $$
      Since the function $h^{\alpha}_{\delta_{\alpha,n}}\circ (h^{\alpha}_{\delta_{\alpha,m}})^{-1}$ is uniformly continuous function, the lemma follows.
  \end{proof}

  \begin{theorem} \label{welldefinedness}
      We can find a sequence of triples $(\epsilon_{1,n},\epsilon_{2,n},\epsilon_{3,n})$ such that when $\D(A_{\alpha,n})<\epsilon_{i,n}$ for all $i=1,2,3$,  we have that conditions (6),(7) will be satisfied for $\Phi_{\alpha}$.
  \end{theorem}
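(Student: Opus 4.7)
My plan is to define $\{\epsilon_{2,n}\}_{n}$ and $\{\epsilon_{3,n}\}_{n}$ inductively. At stage $n$, I assume that for every $k<n$ a value $\delta_{\alpha,k}<\min_{i}\epsilon_{i,k}$ has been fixed, so that $\Lambda_{\alpha,k}$, $\Phi_{\alpha,k}$, $\Phi^{\sigma}_{\alpha,k}$ and $\Phi^{\tau}_{\alpha,k}$ are all determined, and then I pick $\epsilon_{2,n},\epsilon_{3,n}>0$ small enough to deliver (6) and (7) at that stage.

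For condition (6), the key elementary bound is
\[
\|\Lambda_{\alpha,n}(u)-\Lambda_{\alpha,n}(v)\| \;\geq\; \|u-v\|-\D(A_{\alpha,n}) \qquad (u,v\in\T^{2}),
\]
which follows at once from the fact that $\Lambda_{\alpha,n}$ is the identity outside $A_{\alpha,n}$ (the cases of $0$, $1$, or $2$ points in $A_{\alpha,n}$ are each immediate). Writing $P:=\prod_{r=1}^{\infty}(1-1/a_{r})>0$, I would take
\[
\epsilon_{3,n} \;:=\; \frac{P}{a_{n}}\,\min_{k<n}\,\inf\bigl\{\|\Phi_{\alpha,k}(x)-\Phi_{\alpha,k}(y)\|:\|x-y\|\geq 1/k\bigr\},
\]
which is strictly positive because each $\Phi_{\alpha,k}$ is a homeomorphism of the compact torus. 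A routine induction on $m-n$ then propagates the single-step contraction estimate and produces the factor $\prod_{r=n+1}^{m}(1-1/a_{r})$ required by (6), using $\prod_{r=n+1}^{m-1}(1-1/a_{r})\geq P$ to close the inductive step.

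For condition (7), each $F_{n}:=(\Phi^{\sigma}_{\alpha,n})^{-1}\circ T\circ\Phi_{\alpha,n}$ is a composition of homeomorphisms of $\T^{2}$, so uniform continuity on $X$ is automatic. For uniform Cauchy, I would telescope: using $\Phi^{\sigma}_{\alpha,n}=\Lambda_{\alpha,\sigma(n)}\circ\Phi^{\sigma}_{\alpha,n-1}$ and $\Phi_{\alpha,n}=\Lambda_{\alpha,n}\circ\Phi_{\alpha,n-1}$, one has
\[
F_{n-1}(x) \;=\; (\Phi^{\sigma}_{\alpha,n})^{-1}\bigl(\Lambda_{\alpha,\sigma(n)}\circ T\circ\Lambda_{\alpha,n}^{-1}\bigr)\bigl(\Phi_{\alpha,n}(x)\bigr),
\]
so the two arguments of $(\Phi^{\sigma}_{\alpha,n})^{-1}$ in $F_{n}(x)$ and $F_{n-1}(x)$ differ by at most $\D(A_{\alpha,n})+\D(A_{\alpha,\sigma(n)})$ uniformly in $x$, since each $\Lambda$ displaces points by no more than the diameter of its support. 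Letting $\omega_{n}$ denote the modulus of continuity of $(\Phi^{\sigma}_{\alpha,n})^{-1}$ on the compact space $\T^{2}$, this gives
\[
\|F_{n}-F_{n-1}\|_{\infty} \;\leq\; \omega_{n}\bigl(\D(A_{\alpha,n})+\D(A_{\alpha,\sigma(n)})\bigr),
\]
which I make $\leq 2^{-n}$ by taking $\epsilon_{2,n}$ sufficiently small; the telescoping series then converges and the sequence is uniformly Cauchy on $X$. The analogous argument with $\tau$ in place of $\sigma$ and $T^{-1}$ in place of $T$ handles the second sequence.

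The main obstacle is a mild \emph{look-ahead}: because $|\sigma(n)-n|,|\tau(n)-n|\leq 2$, both the modulus $\omega_{n}$ and the term $\D(A_{\alpha,\sigma(n)})$ above may involve values $\delta_{\alpha,j}$ with $j\in\{n+1,n+2\}$ not yet fixed when we pick $\epsilon_{2,n}$. I would resolve this by also imposing at stage $n$ the deferred bounds bequeathed by the earlier stages $\sigma^{-1}(n)$ and $\tau^{-1}(n)$ (which lie within $2$ of $n$), and by using the automatic estimate $\delta_{\alpha,j}<2^{-j}$ from (5) to give an \emph{a priori} upper bound on $\omega_{n}$ that depends only on previously constructed data. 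The final value of $\epsilon_{2,n}$ is the minimum of its stage-$n$ requirement and the deferred bounds, which is strictly positive, and the induction closes.
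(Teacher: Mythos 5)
Your overall architecture (inductive choice of the thresholds, a one-step estimate for (6), telescoping for (7)) is the same as the paper's, but both of your key quantitative inputs rest on the premise that the maps $\Lambda_{\alpha,n}$, $\Phi_{\alpha,n}$, $\Phi^{\sigma}_{\alpha,n}$ are homeomorphisms of $\T^2$. By condition (2) they are not: each $\Lambda_{\alpha,n}$ collapses the vertical interval $\{x_n\}\times[y_n-\delta_{\alpha,n}/2,\,y_n+\delta_{\alpha,n}/2]$ to the point $z_n$ (equivalently, $h^\alpha_0$ sends all of $[-1/2,1/2]$ to $0$). This breaks your argument in two places. For condition (6): the quantity $\inf\{\|\Phi_{\alpha,k}(x)-\Phi_{\alpha,k}(y)\|:\|x-y\|\ge 1/k\}$ is not strictly positive in general, because $\Phi_{\alpha,k}$ has nondegenerate fibers over $z_1,\dots,z_k$; the fiber over $z_1$ has some fixed diameter $d_1>0$, so for every $k>1/d_1$ there are points with $\|x-y\|\ge 1/k$ and $\Phi_{\alpha,k}(x)=\Phi_{\alpha,k}(y)$, and your $\epsilon_{3,n}$ degenerates to $0$. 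The paper avoids any absolute lower bound on separation: it arranges $\D(\Phi_{\alpha,m}^{-1}((a_{m+1}+1)A_{\alpha,m+1}))<1/(m+1)$, so two $1/n$-separated points cannot both land in the enlarged square; then either neither lies in $A_{\alpha,m+1}$ (no change), or one does and the other is at distance at least $a_{m+1}\D(A_{\alpha,m+1})$ in the image, which converts the additive loss $\D(A_{\alpha,m+1})$ into the factor $(1-1/a_{m+1})$ \emph{relative to the current separation}, even when that separation is tiny or zero.

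For condition (7) the problem is worse. The map $(\Phi^{\sigma}_{\alpha,n})^{-1}$ is multivalued at $z_{\sigma(1)},\dots,z_{\sigma(n)}$ and is \emph{not} uniformly continuous on the complement (points approaching a collapsed point from opposite sides have preimages separated by the length of the collapsed interval), so the modulus $\omega_n$ you invoke does not exist; and the set where $F_n$ and $F_{n-1}$ differ contains exactly such collapsed points, so you cannot evaluate the telescoped difference there. Likewise, uniform continuity of each $F_n$ on $X$ is not "automatic": it is precisely the content of Lemma \ref{uniformcont}, where the collapsing of $\Lambda_{\alpha,n}$ at $z_n$ is shown to be cancelled by the expansion of $\Lambda_{\alpha,\sigma(n)}^{-1}$ at $Tz_n$ (the composition acts on the second coordinate as the homeomorphism $h^{\alpha}_{\delta_{\alpha,\sigma(n)}}\circ(h^{\alpha}_{\delta_{\alpha,n}})^{-1}$), followed by an induction. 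To make your telescoping work one must first perform this pairing of $\Lambda_{\alpha,\sigma(n+1)}^{-1}$ with $\Lambda_{\alpha,n+1}$ (Lemma \ref{controlnumber}), which reduces the estimate to $\Phi_{\alpha,n-2}^{-1}$ evaluated at pairs of points lying in the sets $N_{n-2}$ that avoid the singular fibers; only there does a genuine modulus $\eta_{n-2}$ exist, and that is what $\epsilon_{3,n}$ is built from in the paper. Your "look-ahead" remarks are sensible but secondary to these two gaps.
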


  \begin{proof}
        Note that we have chosen a sequence of reals $\{\epsilon_{1,n}\}$ in the beginning of this section. We are proceeding to find $\{\epsilon_{2,n}\}_n$ and $\{\epsilon_{3,n}\}_n$.
        
        We first define the sequence $\{\epsilon_{2,n}\}_n$. First, we fix a sequence of integers $\{a_n\}_n$ such that $\prod_{n=1}^{\infty}(1-\frac{1}{a_n})>0$. For any $t>0$ and a square $A$, we use the notation $tA$ to denote the square with the same center as $A$, and sides $t$-times as long as those of $A$. We take $\epsilon_{2,0}=\epsilon_{1,0}$. For any $n\geq 1$, we take $\epsilon_{2,n}$ to be so small such that when $\delta_{\alpha,n}<\epsilon_{2,n}$
        $$
        \D(\Phi^{-1}_{\alpha,n-1}((a_n+1)A_{\alpha,n}))<\frac{1}{n}.
        $$
        Since $A_n$ is disjoint with all $\{x_m\}\times \T$ for $m<n$, we have that $\Phi_{\alpha,n-1}$ is a homeomorphism restricted on $A_{\alpha,n}$.

        Now we define the sequence $\{\epsilon_{3,n}\}_n$. First, we let $\epsilon_{3,0}=\epsilon_{1,0}$. By conditions (4) and (5), we can choose a sequence of closed subsets $\{N_s\}_{s\in \mathbb{N}}$ such that for all $s\geq 0$, we have 
        \begin{itemize}
            \item [(i)]$z_i\not\in N_s$ for all $i\leq s$.
            \item [(ii)]For $s+1\leq i\leq s+5$, we have $A_i\subset N_s$ and $T(A_{{s+3}})$, $T^{-1}(A_{s+3})$ are subsets of $N_s$. Recall that $T$ is the minimal rotation we fixed in the beginning of the construction. For example, one can take $N_s$ to be the shaded region in the figure below.

            \begin{center}
    
\begin{tikzpicture}

\draw[ultra thick, black] (0,0) rectangle (12,12);

\draw[thick, black] (1,1) rectangle (5,5);      
\draw[thick, black, fill=black!30] (6,1) rectangle (9,4);      
\draw[thick, black, fill=black!30] (10,1) rectangle (11.5,2.5); 
\draw[thick, black, fill=black!30] (1,6) rectangle (4,9);      
\draw[thick, black, fill=black!30] (5,6) rectangle (7,8);      
\draw[thick, black, fill=black!30] (8,6) rectangle (9.5,7.5);  
\draw[thick, black, fill=black!30] (1,10) rectangle (2.5,11.5); 
\draw[thick, black, fill=black!30] (3.5,10) rectangle (4.5,11); 

\node at (3,3) [below left] {$A_{\alpha,s}$};

\end{tikzpicture}

Picture of $N_s$
\end{center}
            
            \item [(iii)]By the previous condition, we know that $\Phi_{\alpha,s}$ is a homeomorphism when restricted on $N_s$. So we can find $\eta_s>0$ such that for all $x,y\in N_s$,
            $$
            ||x-y||\leq \eta_s \Rightarrow\,\,||\Phi_{\alpha,s}^{-1}(x)-\Phi_{\alpha,s}^{-1}(y)||<\frac{1}{2^s}.
            $$
        \end{itemize}
        We take $\epsilon_{3,n}$ to be ${\rm Min}(\eta_i:n-1\leq i\leq n-5)$.

        We have defined the tuple $(\epsilon_{1,n},\epsilon_{2,n},\epsilon_{3,n})$, now we start to verify that it satisfies the assertion of Theorem \ref{welldefinedness}.

        \begin{lemma} \label{NC(6)}
            If $\D(A_{\alpha,n})<\epsilon_{2,n}$, then condition (6) will be satisfied for $\{\Phi_{\alpha,n}\}_n$.
        \end{lemma}
        \begin{proof}
            Let $||x-y||\geq \frac{1}{n}$. For any $m> n$, by the definition of $\{\epsilon_{2,n}\}_n$, we know that 
            $$
            \D(\Phi_{\alpha,m}^{-1}((a_{m+1}+1)A_{\alpha,m+1}))<\frac{1}{m+1}<\frac{1}{n}.
            $$ Thus, the points $\Phi_{\alpha,m}(x)$ and $\Phi_{\alpha,m}(y)$ are not both in $(a_{m+1}+1)A_{\alpha,m+1}$.
            
            \textbf{Case 1.} $\Phi_{\alpha,m}(x)$ and $\Phi_{\alpha,m}(y)$ are both not  in $(a_{m+1}+1)A_{\alpha,m+1}$.
            
               Since $\Lambda_{\alpha,m+1}$ is equal to the identity outside $A_{\alpha,m+1}$, we know that 
               $$
               ||\Phi_{\alpha,m+1}(x)-\Phi_{\alpha,m+1}(y)||=||\Phi_{\alpha,m}(x)-\Phi_{\alpha,m}(y)||>(1-\frac{1}{a_m})||\Phi_{\alpha,m}(x)-\Phi_{\alpha,m}(x)||.
               $$
               This implies condition (6).
               
            \textbf{Case 2.} Exactly one of $\Phi_{\alpha,m}(x)$ and $\Phi_{\alpha,m}(y)$ is in $(a_{m+1}+1)A_{\alpha,m+1}$. 
            
              Note that if  $\Phi_{\alpha,m}(x)\in (1+a_{m+1})A_{\alpha, m+1}\setminus A_{\alpha,m+1}$, then since $\Lambda_{\alpha,m+1}$ is equal to the identity outside $A_{\alpha,m+1}$, the same argument as in Case 1 works. Hence, we may assume that $\Phi_{\alpha,m}(x)\in A_{\alpha, m+1}$ and $\Phi_{\alpha,m}(y)\not\in (a_{m+1}+1)A_{\alpha,m+1}$.  This implies that 
              $$
              ||\Phi_{\alpha,m}(x)-\Phi_{\alpha,m}(y)||\geq a_{m+1}\D(A_{\alpha,m+1}).
              $$
              We have
              $$
              ||\Phi_{\alpha,m+1}(x)-\Phi_{\alpha,m}(x)||=||\Lambda_{\alpha,m+1}\circ \Phi_{\alpha,m}(x)-\Phi_{\alpha,m}(x)||<\D(A_{\alpha,m+1}).
              $$
              So,
              $$
               ||\Phi_{\alpha,m+1}(x)-\Phi_{\alpha,m+1}(y)||=||\Phi_{\alpha,m+1}(x)-\Phi_{\alpha,m}(y)|| \geq
               $$
               $$
               \geq ||\Phi_{\alpha,m}(x)-\Phi_{\alpha,m}(y)||-||\Phi_{\alpha,m+1}(x)-\Phi_{\alpha,m}(x)||>||\Phi_{\alpha,m}(x)-\Phi_{\alpha,m}(y)||-\D(A_{\alpha,m+1}) \geq
               $$
               $$
               \geq (1-\frac{1}{a_{m+1}})||\Phi_{\alpha,m}(x)-\Phi_{\alpha,m}(y)||.
              $$
              This implies condition (6).
        \end{proof}
        Note that $X=(\T-\{x_n\}_n)\times \T$ is a comeager subset of $\T^2$.
        \begin{lemma} \label{controlnumber}
            If $\D(A_{\alpha,n})<\epsilon_{1,n}$, then we have
            $$
            \SU_{x\in X}||(\Phi^{\sigma}_{\alpha,n})^{-1}\circ T \circ \Phi_{\alpha,n}(x)-(\Phi^{\sigma}_{\alpha,n+1})^{-1}\circ T \circ \Phi_{\alpha,n+1}(x)||\leq
            $$
            $$
            \leq {\rm Max}_{n-1\leq r\leq n+3}\{\SU_{x\in X}||\Phi_{\alpha,n-2}^{-1}\circ \Lambda_{\alpha,r}(x)-\Phi_{\alpha.n-2}^{-1}(x)||\}+\SU_{x\in X}||\Phi^{-1}_{\alpha,n-2}\circ T\circ \Lambda_{\alpha,n+1}(x)-\Phi_{\alpha,n-2}^{-1}\circ T(x)||.
            $$
            and
            $$
            \SU_{x\in X}||(\Phi^{\tau}_{\alpha,n})^{-1}\circ T^{-1} \circ \Phi_{\alpha,n}(x)-(\Phi^{\tau}_{\alpha,n+1})^{-1}\circ T^{-1} \circ \Phi_{\alpha,n+1}|(x)|\leq 
            $$
            $$
            \leq{\rm Max}_{n-1\leq r\leq n+3}\{\SU_{x\in X}||\Phi_{\alpha,n-2}^{-1}\circ \Lambda_{\alpha,r}(x)-\Phi_{\alpha.n-2}^{-1}(x)||\}+\SU_{x\in X}||\Phi^{-1}_{\alpha,n-2}\circ T\circ \Lambda_{\alpha,n+1}(x)-\Phi_{\alpha,n-2}^{-1}\circ T^{-1}(x)||.
            $$
        \end{lemma}
        \begin{proof}
            Note that $A_{\alpha,n}$ is disjoint with $A_{\alpha,m}$ for all $|m-n|\leq 5$. And the functions $(\Phi^{\sigma}_{\alpha,n})^{-1}\circ T \circ \Phi_{\alpha,n}$ and $(\Phi^{\sigma}_{\alpha,n+1})^{-1}\circ T \circ \Phi_{\alpha,n+1}$ are different only on $A_{\alpha,n+1}$ and $A_{\alpha,\sigma(n+1)}$. On $A=X\cap (A_{\alpha,n+1}\cup A_{\alpha,\sigma(n+1)})$,  we have
            $$
         (\Phi^{\sigma}_{\alpha,n})^{-1}\circ T \circ \Phi_{\alpha,n}= \Phi_{\alpha,n-2}^{-1}\circ T\circ \Phi_{\alpha,n-2}. 
            $$
            and 
            $$
             (\Phi^{\sigma}_{\alpha,n+1})^{-1}\circ T \circ \Phi_{\alpha,n+1}=\Phi_{\alpha,n-2}^{-1}\circ \Lambda_{\alpha,\sigma(n+1)}^{-1} \circ T \circ\Lambda_{\alpha,n+1}\circ \Phi_{\alpha,n-2}.
            $$
            Thus,
            $$ 
          \SU_{x\in A}||(\Phi^{\sigma}_{\alpha,n})^{-1}\circ T \circ \Phi_{\alpha,n}(x)-(\Phi^{\sigma}_{\alpha,n+1})^{-1}\circ T \circ \Phi_{\alpha,n+1}(x)||=
            $$
            $$
            \SU_{x\in A}||\Phi_{\alpha,n-2}^{-1}\circ T\circ \Phi_{\alpha,n-2}(x)-\Phi_{\alpha,n-2}^{-1}\circ \Lambda_{\alpha,\sigma(n+1)}^{-1} \circ T \circ\Lambda_{\alpha,n+1}\circ \Phi_{\alpha,n-2}(x)||=
            $$
            $$
            \SU_{x\in A}||\Phi_{\alpha,n-2}^{-1}\circ T(x)-\Phi_{\alpha,n-2}^{-1}\circ \Lambda_{\alpha,\sigma(n+1)}^{-1} \circ T \circ\Lambda_{\alpha,n+1}(x)||
            $$
            We will have that
            $$
            \SU_{x\in A}||\Phi_{\alpha,n-2}^{-1}\circ T(x)-\Phi_{\alpha,n-2}^{-1}\circ \Lambda_{\alpha,\sigma(n+1)}^{-1} \circ T \circ\Lambda_{\alpha,n+1}(x)||\leq
            $$
            $$
            \leq \SU_{x\in A}||\Phi_{\alpha,n-2}^{-1}\circ T(x)-\Phi_{\alpha,n-2}^{-1} \circ T \circ\Lambda_{\alpha,n+1}(x)||+\SU_{x\in A}||\Phi_{\alpha,n-2}^{-1} \circ T \circ\Lambda_{\alpha,n+1}(x)-\Phi_{\alpha,n-2}^{-1}\circ \Lambda_{\alpha,\sigma(n+1)}^{-1} \circ T \circ\Lambda_{\alpha,n+1}(x)||=
            $$
            $$
           \SU_{x\in A}||\Phi_{\alpha,n-2}^{-1}\circ T(x)-\Phi_{\alpha,n-2}^{-1} \circ T \circ\Lambda_{\alpha,n+1}(x)|| + \SU_{x\in A}||\Phi_{\alpha,n-2}^{-1}\circ \Lambda_{\alpha,\sigma(n+1)}^{-1}(x)-\Phi_{\alpha,n-2}^{-1}(x)||.
            $$
            Since $|\sigma(n+1)-(n+1)|\leq 2$ for all $n$, the first part of the result follows, the second part is similar.
        \end{proof}
        \begin{lemma}
            If $\D(A_{\alpha,n})\leq {\rm Min}\{\epsilon_{3,n},\epsilon_{1,n}\}$ for all $n$, then we have
            $$
            \SU_{x\in X}||(\Phi^{\sigma}_{\alpha,n})^{-1}\circ T \circ \Phi_{\alpha,n}(x)-(\Phi^{\sigma}_{\alpha,n+1})^{-1}\circ T \circ \Phi_{\alpha,n+1}(x)||\leq \frac{1}{2^{n-3}}
            $$
            and
            $$
             \SU_{x\in X}||(\Phi^{\tau}_{\alpha,n})^{-1}\circ T^{-1} \circ \Phi_{\alpha,n}(x)-(\Phi^{\tau}_{\alpha,n+1})^{-1}\circ T^{-1} \circ \Phi_{\alpha,n+1}(x)||\leq \frac{1}{2^{n-3}}.
            $$
        \end{lemma}
        \begin{proof}
            Note that we have defined $N_s$ and $\eta_s$ in the beginning of Theorem \ref{welldefinedness}. By condition (4), if $n-1\leq r\leq n+3$, $x\in X$ and $\Lambda_{\alpha,r}^{-1}(x)\neq x$, then we have that $x,\Lambda_{\alpha,r}^{-1}(x)\in N_{n-2} \cap A_{\alpha,r}$ and $||x-\Lambda_{\alpha,r}^{-1}(x)||\leq \eta_{n-2}$.

            If $T\circ \Lambda_{\alpha,n+1}(x)\neq T(x)$, then by the definition of $N_s$, we know that both $T\circ \Lambda_{\alpha.n+1}$ and $T(x)\in N_{n-2}$. Since $T$ is an isometry, we have
            \begin{equation} \label{Distance}
                ||T\circ \Lambda_{\alpha,n+1}(x)-T(x)||=||\Lambda_{\alpha,n+1}(x)-x||<\eta_{n-2}.
            \end{equation}
            By the choice of $\eta_s$ thus $\epsilon_{1,s}$ and $\epsilon_{3,s}$, for any $n-1\leq r\leq n+3$, we have
            $$
            \SU_{x\in X}||\Phi_{\alpha, n-2}^{-1}\circ \Lambda_{\alpha,r}^{-1}(x)-\Phi_{\alpha,n-2}^{-1}(x)||<\frac{1}{2^{n-2}}.
            $$
            Also, by (\ref{Distance}) and (iii), we have 
            $$
            \SU_{x\in X}||\Phi_{\alpha,n-2}^{-1}\circ T \circ \Lambda_{\alpha,n+1}(x)-\Phi^{-1}_{\alpha,n-2}\circ T(x)||< \frac{1}{2^{n-2}}.
            $$

            Thus, by Lemma \ref{controlnumber}, we have
            $$
            \SU_{x\in X}||(\Phi^{\sigma}_{\alpha,n})^{-1}\circ T \circ \Phi_{\alpha,n}(x)-(\Phi^{\sigma}_{\alpha,n+1})^{-1}\circ T \circ \Phi_{\alpha,n+1}(x)|| \leq \frac{1}{2^{n-3}}
            $$
            The proof of the second inequality is similar.
\end{proof}
\begin{lemma}
    $\{(\Phi^{\sigma}_{\alpha,n})^{-1}\circ T \circ \Phi_{\alpha,n}\}_n$ and $\{(\Phi^{\tau}_{\alpha,n})^{-1}\circ T^{-1} \circ \Phi_{\alpha,n}\}_n$ are uniformly continuous on $X$.
\end{lemma}
\begin{proof}
    By Lemma \ref{uniformcont}, we know that $(\Phi^{\sigma}_{\alpha,0})^{-1}\circ T \circ \Phi_{\alpha,0}$ is uniformly continuous.
    Assume inductively that $(\Phi^{\sigma}_{\alpha,n})^{-1}\circ T \circ \Phi_{\alpha,n}$ is uniformly continuous. Since functions $(\Phi^{\sigma}_{\alpha,n+1})^{-1}\circ T \circ \Phi_{\alpha,n+1}$ and $(\Phi^{\sigma}_{\alpha,n})^{-1}\circ T \circ \Phi_{\alpha,n}$ differ only on $[x_{n+1}-\epsilon_{1,n+1},x_{n+1}+\epsilon_{1,n+1}]\times \T=D$. It is enough to show that $(\Phi^{\sigma}_{\alpha,n+1})^{-1}\circ T \circ \Phi_{\alpha,n+1}$ is uniformly continuous on $D$. Note that $D$ is invariant under $\Lambda_{\alpha,m}$ for all $m\in \mathbb{N}$ and 
    $$
(\Phi^{\sigma}_{\alpha,n+1})^{-1}\circ T \circ \Phi_{\alpha,n+1} (D)=[x_{\sigma(n+1)}-\epsilon_{1,n+1},x_{\sigma(n+1)}+\epsilon_{1,n+1}]\times \T=D'.
    $$
    Note that $D'$ and $D$ are disjoint, also, $D'$ is invariant under $\Lambda_{\alpha,m}$ for all $m\in \mathbb{N}$. By condition (5), we know that  $\Phi_{\alpha,n}$ is uniformly continuous on $D$ and $(\Phi^{\sigma}_{\alpha,n})^{-1}$ is uniformly continuous on $B$. By Lemma \ref{uniformcont}, we know that $\Lambda_{\alpha.\sigma(n+1)}^{-1} \circ T \circ \Lambda_{\alpha, n+1}$ is uniformly continuous on $D$.

    Since
    $$
    (\Phi^{\sigma}_{\alpha,n+1})^{-1}\circ T \circ \Phi_{\alpha,n+1}=(\Phi^{\sigma}_{\alpha,n})^{-1}\circ \Lambda_{\alpha,\sigma(n+1)}^{-1}\circ T\circ \Lambda_{\alpha,n+1} \circ \Phi_{\alpha,n},
    $$
    we have that $(\Phi^{\sigma}_{\alpha,n+1})^{-1}\circ T \circ \Phi_{\alpha,n+1}$ is uniformly continuous as required.
\end{proof}
  \end{proof}
  First, by condition (4), we know that $\{\Phi^{\sigma}_{\alpha,n}\}_n$ and $\{\Phi^{\tau}_{\alpha,n}\}_n$ uniformly converge to $\Phi_{\alpha}$. By Theorem \ref{welldefinedness}, we know that   $\{(\Phi^{\sigma}_{\alpha,n})^{-1}\circ T \circ \Phi_{\alpha,n}\}_n$ and $\{(\Phi^{\tau}_{\alpha,n})^{-1}\circ T^{-1} \circ \Phi_{\alpha,n}\}_n$ are uniformly continuous on $X$. Denote the limit of  $\{(\Phi^{\sigma}_{\alpha,n})^{-1}\circ T \circ \Phi_{\alpha,n}\}_n$ and $\{(\Phi^{\tau}_{\alpha,n})^{-1}\circ T^{-1} \circ \Phi_{\alpha,n}\}_n$ by $\Gamma_{\alpha}$ and $\Delta_{\alpha}$ respectively. We have that $\Gamma_{\alpha}$ and $\Delta_{\alpha}$ are uniformly continuous on $X$. Thus, $S_{\alpha}$ is well-defined. Also, we have $\Phi_{\alpha}\circ \Gamma_{\alpha}=T\circ \Phi_{\alpha}$ and $\Phi_{\alpha}\circ \Delta_{\alpha}=T^{-1} \circ \Phi_{\alpha}$.  In a conclusion, $\Phi_{\alpha}$ is a factor map from $(\T^2,S_{\alpha})$ to $(\T^2,T)$.
  The last step of proving the well-definedness is to show the following:
  \begin{lemma}
      For any $z\in \T^2$, we have $\Phi_{\alpha}^{-1}(\{z\})$ is a singleton if $z\neq z_n$ for all $n\in \mathbb{N}$.
  \end{lemma}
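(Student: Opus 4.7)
The plan is to argue by contraposition: assume $x \neq y$ and $\Phi_\alpha(x) = \Phi_\alpha(y) = z$, and show that $z = z_k$ for some $k\in\mathbb{N}$.

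The heart of the argument is to use the expansion estimate (6) to pass from the limit equality to an equality at a finite stage. Choose $n_0$ with $\|x - y\| \geq 1/n_0$. Condition (6) then yields, for every $m > n_0$,
$$\|\Phi_{\alpha,m}(x) - \Phi_{\alpha,m}(y)\| \;\geq\; \prod_{r=n_0+1}^{m}\left(1 - \tfrac{1}{a_r}\right) \cdot \|\Phi_{\alpha,n_0}(x) - \Phi_{\alpha,n_0}(y)\|.$$
Since $\Phi_{\alpha,m} \to \Phi_\alpha$ uniformly and $\prod_{r > n_0}(1 - 1/a_r) > 0$ by the choice of $\{a_n\}$ in (6), sending $m \to \infty$ gives $0 \geq C \cdot \|\Phi_{\alpha,n_0}(x) - \Phi_{\alpha,n_0}(y)\|$ with $C > 0$, hence $\Phi_{\alpha,n_0}(x) = \Phi_{\alpha,n_0}(y)$.

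Next, I would identify the layer at which this identification was forced. Let $k \leq n_0$ be the least index with $\Phi_{\alpha,k}(x) = \Phi_{\alpha,k}(y)$. Then at stage $k$, two distinct points (namely $\Phi_{\alpha,k-1}(x)$ and $\Phi_{\alpha,k-1}(y)$, or just $x$ and $y$ when $k=1$) are glued by $\Lambda_{\alpha,k}$. Condition (2) then forces both preimages to lie in $\{x_k\} \times [y_k - \delta_{\alpha,k}/2,\, y_k + \delta_{\alpha,k}/2]$, and $\Phi_{\alpha,k}(x) = z_k$. Condition (5) ensures that $A_{\alpha,j}$ is disjoint from $\{x_k\} \times \T$ for every $j > k$, so $z_k \notin A_{\alpha,j}$; by condition (1), $\Lambda_{\alpha,j}$ therefore fixes $z_k$. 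Iterating, $\Phi_{\alpha,n}(x) = z_k$ for all $n \geq k$, and passing to the uniform limit gives $z = \Phi_\alpha(x) = z_k$, contradicting the hypothesis that $z$ is not on the $T$-orbit.

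The only substantive input is the expansion estimate (6); the remaining work is pure bookkeeping through conditions (1), (2) and (5). The main obstacle -- the quantitative control ensuring that injectivity of $\Phi_{\alpha,n_0}$ on a positively separated pair is not destroyed by the infinite composition -- was already absorbed into the construction via the choice of $\{a_n\}$ with $\prod (1 - 1/a_n) > 0$, so here one only has to read it off.
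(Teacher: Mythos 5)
Your proof is correct and follows essentially the same route as the paper: both use the expansion estimate of condition (6) together with $\prod(1-1/a_r)>0$ to force $\Phi_{\alpha,n_0}(x)=\Phi_{\alpha,n_0}(y)$ at a finite stage, and then conclude the common value must be some $z_k$ from the structure of the $\Lambda_{\alpha,n}$. Your final bookkeeping (locating the least gluing stage $k$ via condition (2) and checking that later $\Lambda_{\alpha,j}$ fix $z_k$ via conditions (1) and (5)) is exactly the step the paper compresses into one sentence.
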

  \begin{proof}
      Suppose $\Phi_{\alpha}(x)=\Phi_{\alpha}(y)$ for some $x\neq y$, we may assume that $||x-y||> \frac{1}{n}$ for some $n$.  Then
      $$
     0=||\Phi_{\alpha}(x)-\Phi_{\alpha}(y)||= {\rm lim}_m ||\Phi_{\alpha,m}(x)-\Phi_{\alpha,m}(y)|| \geq \prod_{r=n}^{\infty} (1-\frac{1}{a_r})||\Phi_{\alpha,n}(x)-\Phi_{\alpha,n+1}(y)||.
      $$
      Since $\prod_{r=n}^{\infty} (1-\frac{1}{a_r})$ is positive, we know that $\Phi_{\alpha,n}(x)=\Phi_{\alpha,n}(y)=z_i$ for some $i\leq n$. By condition (5), we know that $\Phi_{\alpha}(x)=\Phi_{\alpha}(y)=z_i$ for some $i\leq n$.
  \end{proof}

\section{Topological type of sequences} \label{tt section}
 Before we move to the Borel reduction part, we will define an equivalence relation which is related to topological conjugacy relation. It was initially defined in \cite[Definition 4.2]{BoLiminimal}.

 \begin{definition}
     Let $X$ be a compact metric space, we define an equivalence relation $E_{\rm tt}(X)$ on $X^\omega$:

     $$
     (x_n)E_{\rm tt}(y_n) \Leftrightarrow\,  (x_n), (y_n)\,\, \mbox{have the same convergent subsequences}.
     $$
\end{definition}
     
\begin{lemma} \label{tt type}
    Two minimal systems $(X,f)$ and $(Y,g)$ are conjugate if and only if there exists $x\in X$ and $y\in Y$ such that $(f^nx) E_{\rm tt} (g^ny)$. Also, if there is an isomorphism $h$ sending $x$ to $y$, then $(f^nx) E_{\rm tt} (g^nh(y))$.
\end{lemma}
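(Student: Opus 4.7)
My plan is to treat the two directions separately, both of which rest on the continuity of the conjugacy and its inverse. The forward implication, together with the \emph{``Also''} clause, is essentially a tautology: if $h:X\to Y$ is a conjugacy with $h(x)=y$, then $h(f^nx)=g^ny$ for every $n$, and since $h$ and $h^{-1}$ are continuous, a subsequence indexed by $(n_k)$ is convergent in $X$ if and only if its image under $h$ is convergent in $Y$. This is exactly the meaning of $(f^nx)\,E_{\rm tt}\,(g^ny)$.

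For the converse I construct the conjugacy $h:X\to Y$ by taking limits along the distinguished orbit. Given $z\in X$, minimality of $(X,f)$ yields a strictly increasing $(n_k)$ with $f^{n_k}x\to z$; the hypothesis $(f^nx)\,E_{\rm tt}\,(g^ny)$ forces $(g^{n_k}y)$ to converge, and I set $h(z):=\lim_k g^{n_k}y$. The definition is independent of the subsequence: if $f^{n_k}x\to z$ and $f^{m_k}x\to z$, I interleave $(n_k)$ and $(m_k)$ into a single strictly increasing sequence $(\ell_j)$, so that $(f^{\ell_j}x)$ still converges to $z$; by $E_{\rm tt}$, $(g^{\ell_j}y)$ converges, and both $(g^{n_k}y)$ and $(g^{m_k}y)$ must share its limit. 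Commutation with the dynamics is automatic, since $f^{n_k+1}x\to fz$ and $g^{n_k+1}y\to gh(z)$ force $h(fz)=gh(z)$. Surjectivity is symmetric: any $w\in Y$ is a limit $g^{n_k}y\to w$ by minimality of $(Y,g)$, and then $(f^{n_k}x)$ must converge to some $z$ with $h(z)=w$.

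The step I expect to be the main obstacle is continuity of $h$, because $h$ has only been defined through the limiting behavior of a single orbit. To handle it, suppose $z_m\to z$ in $X$ but $h(z_m)\not\to h(z)$; after passing to a subsequence one may assume $h(z_m)\to w$ for some $w\neq h(z)$. For each $m$, by the very definition of $h(z_m)$ there is a subsequence of $(f^nx)$ converging to $z_m$ along which $(g^ny)$ converges to $h(z_m)$, so I can pick $n_m>n_{m-1}$ with both $||f^{n_m}x-z_m||<1/m$ and $||g^{n_m}y-h(z_m)||<1/m$. Then along the strictly increasing $(n_m)$ I have $f^{n_m}x\to z$ and $g^{n_m}y\to w$, which by the defining property of $h$ forces $h(z)=w$, contradicting $w\neq h(z)$. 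The symmetric construction produces a continuous $h':Y\to X$ intertwining $g$ with $f$, and unravelling the definitions shows $h'\circ h=\mathrm{id}_X$ on every limit point of the orbit of $x$ (hence on all of $X$ by minimality) and similarly $h\circ h'=\mathrm{id}_Y$. Thus $h$ is the desired conjugacy.
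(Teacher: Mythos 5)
Your proof is correct and follows essentially the same route as the paper: the forward direction via continuity of $h$ and $h^{-1}$, and the converse by defining $h(z)=\lim_k g^{n_k}y$ along subsequences with $f^{n_k}x\to z$. The paper dismisses well-definedness, equivariance, continuity and invertibility as ``routine to check''; your interleaving and diagonal arguments are exactly the right way to carry out that check.
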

\begin{proof}
   ($\Leftarrow$) On one hand, if two systems $(X,f)$ and $(Y,g)$ are conjugate by $h$, one can take $x$ and $h(x)$, then $(f^nx)$ and $g^n(h(x))$ have the same topological type. 
   
   ($\Rightarrow$) On the other hand, since $(X,f)$ is minimal, for any $a\in X$, there exists an increasing sequence of natural numbers $(n_k)$ such that $(f^{n_k}x)_k$ converges to $a$, we define $h(a)\in y$ to be the limit of $g^{n_k}(y)$. It is routine to check that $h$ is a well-defined conjugacy map.

\end{proof}
\section{Proof of the reduction from $c_0$ to $E_{\rm cm}$} \label{endgame}
   In this section, we analyze the conjugacy relation of $S_\alpha$. Also, in the rest of the paper, we will use $E_{\rm tt}$ for $E_{\rm tt}(\T^2)$. We start with some general analysis of conjugacy relation. 

   \begin{definition}
       Let $(X,f)$ be a system, two points $x,y\in X$ are called \textbf{non-proximal} if there exists a $\epsilon_0>0$ such that $\forall n\,\,d(f^nx,f^ny)>\epsilon_0$. A system is called \textbf{distal} if any two distinguished points are {non-proximal}.
   \end{definition}

   \begin{lemma}\label{factor map preserves distal pairs}
       Let $(X,f)$ and $(Y,g)$ be two dynamical systems and $\phi$ be a factor map from $(X,f)$ to $(Y,g)$, suppose $y$ and $y'$ are non-proximal in $(Y,g)$, then for any $x\in \phi^{-1}(\{y\}), x'\in \phi^{-1}(\{y'\})$, we have that $x$ and $x'$ are non-proximal in $(X,f)$.
   \end{lemma}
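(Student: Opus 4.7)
The plan is to prove the contrapositive. Suppose $x\in \phi^{-1}(\{y\})$ and $x'\in \phi^{-1}(\{y'\})$ are proximal in $(X,f)$; I will deduce that $y$ and $y'$ must then be proximal in $(Y,g)$, contradicting the hypothesis.

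First I would unpack the definition of non-proximal. Negating the condition "$\exists\,\epsilon_0>0\,\forall n\, d(f^nx,f^nx')>\epsilon_0$" yields: for every $\epsilon>0$ there exists $n\in\mathbb{Z}$ with $d(f^nx,f^nx')<\epsilon$. So proximality of $x,x'$ gives a sequence of times $n_k$ with $d(f^{n_k}x,f^{n_k}x')\to 0$.

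Next I would invoke the factor map intertwining relation $\phi\circ f=g\circ\phi$, which iterates to $\phi(f^n x)=g^n\phi(x)=g^n y$ and similarly $\phi(f^n x')=g^n y'$. Since $X$ is compact and $\phi$ is continuous, $\phi$ is uniformly continuous. Therefore, given any $\epsilon>0$ one finds $\delta>0$ such that $d_X(a,b)<\delta$ implies $d_Y(\phi(a),\phi(b))<\epsilon$. Applying this at a time $n_k$ with $d_X(f^{n_k}x,f^{n_k}x')<\delta$ yields $d_Y(g^{n_k}y,g^{n_k}y')<\epsilon$. Since $\epsilon>0$ was arbitrary, $y$ and $y'$ are proximal in $(Y,g)$, which is the desired contradiction.

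There is no real obstacle here; the only ingredient beyond the definitions is the uniform continuity of $\phi$ on the compact space $X$, together with the semiconjugacy identity. The proof is essentially a one-line observation and can be written in just a few lines.
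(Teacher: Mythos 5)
Your proof is correct and follows essentially the same route as the paper: both argue by contradiction/contraposition that a factor map sends proximal pairs to proximal pairs. The only cosmetic difference is that you invoke uniform continuity of $\phi$ on the compact space $X$, whereas the paper passes to a subsequence along which $f^{n_k}x$ and $f^{n_k}x'$ converge to a common point $z$ and then applies continuity of $\phi$ at $z$; these are interchangeable ways of making the same step rigorous.
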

   \begin{proof}
    Suppose not, we can find a sequence of strictly increasing natural numbers $(n_k)$ such that ${\rm lim}_kd(f^{n_k}x,f^{n_k}x')=0$. By taking a subsequence, we may assume that both $(f^{n_k}x)_k$ and $(f^{n_k}x')_k$ are convergent to the same point $z$. But since $\phi$ is a factor map, this means
    $$
    \phi(z)=\mbox{lim}_k\phi(f^{n_k}x)=\mbox{lim}_k(g^{n_k}\phi(x))= \mbox{lim}_k(g^{n_k}y).
    $$
    similarly, we have
    $$
    \mbox{lim}_k(g^{n_k}y')=\mbox{lim}_k(g^{n_k}\phi(x'))=\mbox{lim}_k\phi(f^{n_k}x')=\phi(z).
    $$
    But since $y$ and $y'$ are non-proximal, ${\rm lim}_k (g^{n_k}y)$ can not be equal to ${\rm lim}_k (g^{n_k}y')$, a contradiction.
    
   \end{proof}

   \begin{definition}
       Let $(X,f)$ be a system, two points $x,y\in X$ are \textbf{asymptotic} if ${\rm lim}_n d(f^nx,f^ny)=0$.
   \end{definition}

       \begin{lemma} \label{distancewithsameimage}
           For any $\alpha\in [0,1]^{\omega}$. Suppose $\Phi_{\alpha}(x)=\Phi_{\alpha}(y)=z_k$, then we have
           $$
            ||x-y|| \leq \frac{1}{k}.
           $$
       \end{lemma}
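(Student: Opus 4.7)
The plan is to track the iterates $\Phi_{\alpha,n}(x)$ and $\Phi_{\alpha,n}(y)$ and then invoke the third bullet of condition (5). First, by condition (B), the hypothesis $\Phi_\alpha(x) = \Phi_\alpha(y) = z_k$ forces both $x$ and $y$ to lie on the vertical fiber $\{x_k\} \times \T$. Since every $\Lambda_{\alpha,m}$ has the form $(u,v) \mapsto (u, \lambda_{\alpha,m}(u,v))$, the first coordinate is preserved by each iterate $\Phi_{\alpha,m}$, so $\Phi_{\alpha,m}(x), \Phi_{\alpha,m}(y) \in \{x_k\} \times \T$ for every $m$.

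The core step is to show $\Phi_\alpha(x) = \Phi_{\alpha,k}(x)$ for every $x \in \{x_k\} \times \T$. Condition (5) includes the clause that $\{x_m\} \times \T$ is disjoint from $A_{\alpha,n}$ whenever $m < n$; applied with $m = k$ and arbitrary $n > k$, this says the fiber $\{x_k\} \times \T$ never meets $A_{\alpha,n}$. Because $\Lambda_{\alpha,n}$ is the identity off $A_{\alpha,n}$, this forces $\Phi_{\alpha,n}(x) = \Phi_{\alpha,n-1}(x)$ for all $n > k$, so the sequence $\Phi_{\alpha,n}(x)$ is eventually constant and its limit $\Phi_\alpha(x)$ coincides with $\Phi_{\alpha,k}(x)$. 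The same reasoning gives $\Phi_\alpha(y) = \Phi_{\alpha,k}(y)$.

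With this reduction in hand, $\Phi_{\alpha,k}(x) = \Phi_{\alpha,k}(y) = z_k$. The explicit description of $\Lambda_{\alpha,k}$ in item (2) gives $\Lambda_{\alpha,k}^{-1}(\{z_k\}) = \{x_k\} \times [y_k - \delta_{\alpha,k}/2, y_k + \delta_{\alpha,k}/2]$, so both $\Phi_{\alpha,k-1}(x)$ and $\Phi_{\alpha,k-1}(y)$ sit in this interval, yielding $\|\Phi_{\alpha,k-1}(x) - \Phi_{\alpha,k-1}(y)\| \leq \delta_{\alpha,k}$. Since $x,y \in \{x_k\} \times \T$, the third bullet of condition (5) applied with $n = k$ converts this into $\|x - y\| < 1/k$, which is the desired bound.

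The only non-routine step is the identity $\Phi_\alpha(x) = \Phi_{\alpha,k}(x)$ on the fiber $\{x_k\} \times \T$, since a priori the tail $\Lambda_{\alpha,n}$ for $n > k$ might contribute infinitely many small perturbations at the collapse point; the disjointness clause in condition (5) is precisely what rules this out and makes the whole argument collapse to a single-step calculation.
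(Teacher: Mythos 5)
Your proof is correct and follows essentially the same route as the paper, which simply cites condition (2) to place $\Phi_{\alpha,k-1}(x)$ and $\Phi_{\alpha,k-1}(y)$ within $\delta_{\alpha,k}$ of each other and then invokes the third bullet of condition (5). Your version is more careful in one respect: you explicitly justify, via the disjointness clause of condition (5), why $\Phi_\alpha$ agrees with $\Phi_{\alpha,k}$ on the fiber $\{x_k\}\times\T$, a step the paper's two-line proof leaves implicit.
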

       \begin{proof}
           By condition (2), we know that if $\Phi_{\alpha}(x)=\Phi_{\alpha}(y)=z_k$, then $||x-y||\leq \delta_{\alpha,k}$. And by the third item of condition (5), this implies that $||x-y||\leq \frac{1}{k}$.
       \end{proof}
        \begin{lemma}\label{asymptotic}
       For any $\alpha\in [0,1]^\omega$, in the minimal system $(\T^2,S_\alpha)$, two points $x\neq y$ are asymptotic if and only if $\Phi_\alpha(x)=\Phi_\alpha(y)=z_n$ for some $n\geq0$ otherwise they are non-proximal.
   \end{lemma}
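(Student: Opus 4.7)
The plan is to treat the two implications separately, leveraging the fact that $\Phi_\alpha$ is a factor map from $(\T^2, S_\alpha)$ onto the minimal rotation $(\T^2, T)$, combined with the fiber-shrinking estimate supplied by Lemma \ref{distancewithsameimage} on the exceptional preimages $\Phi_\alpha^{-1}(\{z_n\})$.

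For the ``if'' direction, suppose $\Phi_\alpha(x) = \Phi_\alpha(y) = z_n$. Using the intertwining $\Phi_\alpha \circ S_\alpha = T \circ \Phi_\alpha$ I get $\Phi_\alpha(S_\alpha^m x) = \Phi_\alpha(S_\alpha^m y) = T^m z_n = z_{\sigma^m(n)}$ for every $m \geq 0$. Then Lemma \ref{distancewithsameimage} will give the bound $\|S_\alpha^m x - S_\alpha^m y\| \leq 1/\sigma^m(n)$, so asymptoticity reduces to checking that $\sigma^m(n) \to \infty$ as $m \to \infty$. This is a direct arithmetic unpacking of the indexing convention in the excerpt: one has $\sigma(2k) = 2k+2$, $\sigma(2k+1) = 2k-1$ for $k \geq 1$, and $\sigma(1) = 0$. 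Hence iterating $\sigma$ pushes any odd index down through $\ldots, 3, 1$ and then into the even progression $0, 2, 4, \ldots$, which diverges.

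For the ``otherwise'' (non-proximal) direction, I first observe that if $x \neq y$ but $\Phi_\alpha(x) = \Phi_\alpha(y)$ equals some point that is not a $z_n$, then the preceding lemma forces $x = y$, a contradiction; so I may assume $\Phi_\alpha(x) \neq \Phi_\alpha(y)$. Because $T$ is a rotation of $\T^2$ it is an isometry, hence distal: any two distinct points of $\T^2$ are non-proximal for $T$. Applying Lemma \ref{factor map preserves distal pairs} to the factor map $\Phi_\alpha : (\T^2, S_\alpha) \to (\T^2, T)$ transfers non-proximality back, so $x$ and $y$ are non-proximal in $(\T^2, S_\alpha)$; in particular they are not asymptotic, which also closes the ``only if'' part of the equivalence.

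The only point requiring care is the combinatorial fact $\sigma^m(n) \to \infty$, but even this is immediate from the explicit description of $\sigma$; conceptually the whole argument is driven by two ingredients already in hand, namely the distality of the rotation base (which covers every pair outside the exceptional fibers) and the geometric shrinking of the exceptional fibers along the orbit (which forces asymptoticity inside them).
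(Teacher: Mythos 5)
Your proof is correct and follows essentially the same route as the paper: the asymptotic direction via the intertwining $\Phi_\alpha\circ S_\alpha=T\circ\Phi_\alpha$ together with Lemma \ref{distancewithsameimage} and the divergence of $\sigma^m(n)$, and the non-proximal direction via distality of the rotation plus Lemma \ref{factor map preserves distal pairs}. If anything, your case split on whether $\Phi_\alpha(x)=\Phi_\alpha(y)$ is slightly cleaner than the paper's (it explicitly covers the case where $x$ and $y$ lie over two \emph{distinct} points $z_n\neq z_m$ of the orbit), and you supply the verification that $\sigma^m(n)\to\infty$, which the paper uses without comment.
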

   \begin{proof}
       \textbf{Case 1}. Suppose one of $\Phi_\alpha(x)$ and $\Phi_\alpha(y)$ is not equal to $z_n$ for all $n$. We may assume that $\Phi_{\alpha}(x)=z\neq z_n$ for all $n\in \mathbb{N}$. Then we know $\Phi_{\alpha}^{-1}(\{z\})=\{x\}$. Thus, we have $\Phi_{\alpha}(x)\neq \Phi_{\alpha}(y)$. Since $(\T^2,T)$ is a distal system, we know $\Phi_{\alpha}(x)$ and $\Phi_{\alpha}(y)$ are non-proximal. Then $x$ and $y$ are non-proximal by Lemma \ref{factor map preserves distal pairs}. 
       
       \textbf{Case 2}. For the other part, suppose $\Phi_\alpha(x)=\Phi_\alpha(y)=z_n$ for some $n$. Since $\Phi_{\alpha}$ will not influence the first coordinate, we assume that $x=(x_n,a)$ and $y=(x_n,b)$. By numerical condition (1), we know that actually,
       $$
       z_n=\Phi_{\alpha}(x_n,a)=\Phi_{\alpha,n}(x_n,a)=\Lambda_{\alpha,n}\circ\Phi_{\alpha,n-1}(x_n,a)=\Lambda_{\alpha,n}\circ\Phi_{\alpha,n-1}(x_n,b).
       $$
        By our construction, this means 
        $$
        \Phi_{\alpha,n-1}(x_n,a),\Phi_{\alpha,n-1}(x_n,b)\in \{x_n\}\times [y_n-\delta_{\alpha,n}/2,y_n+\delta_{\alpha,n}/2]
        $$
       Thus, 
       $$
        ||\Phi_{\alpha,n-1}(x_n,x)-\Phi_{\alpha,n-1}(x_n,y)||=|a-b|\leq \delta_n.
       $$
      by Lemma \ref{distancewithsameimage}, we have 
       $$
       |a-b|<1/n.
       $$
       Since $\Phi_{\alpha}S=T\Phi_{\alpha}$, we know $\Phi_{\alpha}(S_{\alpha}x)=\Phi_{\alpha}(S_{\alpha}y)=Tz_n=z_{\sigma(n)}$, similarly by Lemma \ref{distancewithsameimage}, this means
       $$
       ||S_\alpha x-S_{\alpha}y||<1/\sigma(n).
       $$
       Since $\Phi_{\alpha}$ is a factor map, we have $\Phi_{\alpha} (S^m_{\alpha}x)=\Phi_{\alpha}(S^m_{\alpha}y)=T^m\Phi_{\alpha}(x)=T^mz_{n}=z_{\sigma^m(n)}$, similarly by Lemma \ref{distancewithsameimage}, we have
       $$
       ||S^m_\alpha x-S^m_{\alpha}y||<1/\sigma^m(n).
       $$
       which converges to $0$ as $m$ goes to infinity.
   \end{proof}
   The following lemma will be used to distinguish two systems $(\T^2,S_{\alpha})$ and $(\T^2,S_{\beta})$ for $\alpha,\beta\in [0,1]^{\omega}$.
   
    \begin{lemma}\label{actually this is a pointed conjugacy}
        Let $\alpha,\beta\in 2^{\omega}$, take any $w_n,w_n'\in \T^2$ such that $\Phi_{\alpha}(w_n)=\Phi_{\beta}(w'_n)=T^nz_0=z_{2n}$, then two systems $(\T^2,S_{\alpha})$ and $(\T^2,S_{\beta})$ are topologically conjugate if and only if $(w_n)E_{\rm tt}(w_n')$.
    \end{lemma}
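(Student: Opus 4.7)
The plan is to reduce the biconditional to Lemma \ref{tt type} by replacing the generic preimage sequences $(w_n)$ and $(w_n')$ with genuine orbit sequences. Fix any $\tilde w_0 \in \Phi_\alpha^{-1}(z_0)$ and any $\tilde w_0' \in \Phi_\beta^{-1}(z_0)$. Since $\Phi_\alpha \circ S_\alpha = T \circ \Phi_\alpha$, the point $S_\alpha^n \tilde w_0$ lies in the fiber $\Phi_\alpha^{-1}(z_{2n})$, exactly where $w_n$ does; Lemma \ref{distancewithsameimage} therefore gives $||w_n - S_\alpha^n \tilde w_0|| \leq 1/(2n)$ for every $n\geq 1$. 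The two sequences consequently differ by a null sequence and share the same set of convergent subsequences, so $(w_n)\,E_{\rm tt}\,(S_\alpha^n \tilde w_0)$. The identical argument yields $(w_n')\,E_{\rm tt}\,(S_\beta^n \tilde w_0')$. It therefore suffices to prove that $(S_\alpha^n \tilde w_0)\,E_{\rm tt}\,(S_\beta^n \tilde w_0')$ if and only if the two systems are conjugate.

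The $(\Leftarrow)$ direction is now immediate: assuming $(w_n)\,E_{\rm tt}\,(w_n')$, transitivity of $E_{\rm tt}$ yields $(S_\alpha^n \tilde w_0)\,E_{\rm tt}\,(S_\beta^n \tilde w_0')$, and Lemma \ref{tt type} produces the desired conjugacy.

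For the $(\Rightarrow)$ direction, suppose $h\colon (\T^2, S_\alpha) \to (\T^2, S_\beta)$ is a topological conjugacy. The fiber $\Phi_\alpha^{-1}(z_0) = \{x_0\} \times I_0$ contains a non-degenerate interval, so the $S_\alpha$-asymptotic class of $\tilde w_0$ has more than one element. Asymptoticity is preserved bijectively by any topological conjugacy, so the $S_\beta$-asymptotic class of $h(\tilde w_0)$ is also non-trivial, and Lemma \ref{asymptotic} then forces $\Phi_\beta(h(\tilde w_0)) = z_m$ for some $m \in \mathbb{N}$. The explicit construction of the enumeration $(z_n)$ gives an integer $k(m)$ with $z_m = T^{k(m)} z_0$, so $h' := S_\beta^{-k(m)} \circ h$ is still a conjugacy from $S_\alpha$ to $S_\beta$ and now satisfies $\Phi_\beta(h'(\tilde w_0)) = T^{-k(m)} z_m = z_0$. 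Thus $h'(\tilde w_0)$ and $\tilde w_0'$ lie in the common fiber $\Phi_\beta^{-1}(z_0)$; by Lemma \ref{asymptotic} they are equal or asymptotic under $S_\beta$, which gives $||S_\beta^n h'(\tilde w_0) - S_\beta^n \tilde w_0'|| \to 0$ and hence $(S_\beta^n h'(\tilde w_0))\,E_{\rm tt}\,(S_\beta^n \tilde w_0')$. Lemma \ref{tt type} applied to the conjugacy $h'$ further gives $(S_\alpha^n \tilde w_0)\,E_{\rm tt}\,(S_\beta^n h'(\tilde w_0))$. Chaining the four $E_{\rm tt}$-equivalences $(w_n)\,E_{\rm tt}\,(S_\alpha^n \tilde w_0)\,E_{\rm tt}\,(S_\beta^n h'(\tilde w_0))\,E_{\rm tt}\,(S_\beta^n \tilde w_0')\,E_{\rm tt}\,(w_n')$ concludes the proof.

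The main obstacle is this alignment step in the $(\Rightarrow)$ direction: an abstract conjugacy $h$ need not respect the factor maps $\Phi_\alpha$ and $\Phi_\beta$ onto the rotation, so $h(\tilde w_0)$ may a priori lie in an arbitrary fiber. The dynamical invariance of the non-singleton asymptotic classes, together with their explicit characterization as the distinguished fibers $\Phi_\beta^{-1}(z_m)$ provided by Lemma \ref{asymptotic}, is precisely what forces $h(\tilde w_0)$ into such a fiber and permits the shift by a power of $S_\beta$ to realign it with $\tilde w_0'$.
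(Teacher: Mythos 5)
Your proof is correct and follows essentially the same route as the paper: both arguments use the non-trivial asymptotic classes (Lemma \ref{asymptotic}) to force a conjugacy to carry the distinguished fibers to distinguished fibers, realign with a power of $S_{\beta}$, and then invoke Lemma \ref{tt type} together with the fact that points in a common fiber over $z_k$ are uniformly close (Lemma \ref{distancewithsameimage}). The only differences are organizational: you perform the realignment once at the base point and propagate along orbits, and you make explicit that the $E_{\rm tt}$-class of $(w_n)$ is independent of the choice of preimages, a point the paper's $(\Leftarrow)$ direction uses implicitly.
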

    \begin{proof}
        ($\Rightarrow$) Suppose $(\T^2,S_{\alpha})$ and $(\T^2,S_{\beta})$ are topologically conjugate by $f$. By Lemma \ref{asymptotic}, there exists $v_n\neq w_n$ (any other element in $\Phi_{\alpha}^{-1}(\{z_n\})$) such that $v_n$ and $w_n$ are asymptotic. Since conjugacy maps preserve asymptotic pairs, $f(v_n)$ and $f(w_n)$ are asymptotic, by Lemma \ref{asymptotic}, we know $\Phi_{\beta}f(w_n)=\Phi_{\beta}f(v_n)=z_k$ for some $k$. 
        We can choose $d\in \mathbb{Z}$ such that $T^dz_k=z_{2n}$. And since 
        $$
         \Phi_{\beta} \circ S_\beta=T \circ \Phi_{\beta},
        $$
        we have
        $$
        \Phi_{\beta}S_\beta^df(w_n)=T^d\Phi_{\beta}f(v_n)=T^dz_k=z_{2n}.
        $$

        In conclusion, we may assume that $\Phi_\beta f(w_n)=z_{2n}$ in addition to our assumption that $\Phi_{\alpha}(w_n)=z_{2n}$.
        
        Since $f$ is a conjugacy map from $(\T^2,S_{\alpha})$ to $(\T^2, S_{\beta})$ sending $w_n$ to $f(w_n)$, by Lemma \ref{tt type}, we have
        $$
        (S^m_{\alpha}w_n)_mE_{\rm tt}(S_{\beta}^mf(w_n))_m.
        $$
        Again, since $\Phi_{\alpha}$ is a factor map, for any $m\in \mathbb{Z}$, we have
        $$
       \Phi_{\alpha}(S_{\alpha}^mw_n)=T^m \Phi_{\alpha}(w_n)=T^mz_{2n}.
        $$
        By definition of $(w_n)$, we have $\Phi_{\alpha}(w_{n+m})=T^mz_{2n}$. By the above and Lemma \ref{asymptotic}, we know 
        $$
        ||S^m_\alpha w_{n}-w_{n+m}|| \rightarrow0
        $$
        as $m \rightarrow\infty$. Thus,
        $$
         (S^m_{\alpha}w_n)_mE_{\rm tt} (w_{n+m})_m
        $$
        and since we already assumed that $\Phi_\beta f(w_n)=z_{2n}$, by the same argument, we have
        $$
         (S^m_{\beta}f(w_n))_mE_{\rm tt} (w'_{n+m})_m.
        $$
        Thus, we have
        $$
        (w'_{n+m})_m E_{\rm tt} (w_{n+m})_m.
        $$
        But topological type does not depend on any finitely many terms in the beginning, this means 
        $$
         (w_n)_n E_{\rm tt} (w'_n)_n.
        $$
        ($\Leftarrow$) We can take $w_n$ to be $S_\alpha^n w_0$ for some $w_0\in \Phi^{-1}_{\alpha}(z_0)$ and $w_n'$ to be $S^n_\beta w_0'$ for some $w_0'\in \Phi^{-1}_{\beta}(z_0)$. Now we have
        $$
        (S^n_{\alpha}w_0)E_{\rm tt}(S^n_{\beta}w_0'),
        $$
        by Lemma \ref{tt type}, we are done.
    \end{proof}

    \begin{lemma}\label{norm close}
        For $\alpha,\beta\in [0,1]^{\omega}$, we have $\alpha (c_0)\beta$ implies that for any $\epsilon>0$ there exists $\delta>0$, such that for any $0<t<\delta$, we have $\SU_{x\in X}\{||(h^{\alpha}_t)^{-1}(x)-(h^{\beta}_t)^{-1}(x)||\}<\epsilon$.
    \end{lemma}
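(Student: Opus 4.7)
The plan is to reduce the norm estimate on the inverses to a bound on $|p_\alpha(t) - p_\beta(t)|$, and then read off from the $c_0$ hypothesis that the latter tends to zero as $t \to 0^+$.

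First I would record the vertex structure of $h^\alpha_t$ for $t \in (0,1]$: it is the polyline through
\begin{equation*}
(-1,-1),\ ((-1-t)/2,-t),\ (-1/2,-t/2),\ (-p_\alpha(t), t/4),\ (1/2, t/2),\ ((1+t)/2, t),\ (1,1),
\end{equation*}
and only the fourth vertex depends on $\alpha$. Hence on the $y$-range $[-1,-t/2]\cup [t/2,1]$ the two inverses $(h^\alpha_t)^{-1}$ and $(h^\beta_t)^{-1}$ coincide, while on the remaining range $y \in [-t/2, t/2]$ each inverse is piecewise affine with breakpoint $y = t/4$. A direct computation of the two affine pieces (one on $[-t/2,t/4]$ running between $(-1/2,-t/2)$ and $(-p_\alpha(t),t/4)$, one on $[t/4,t/2]$ running between $(-p_\alpha(t),t/4)$ and $(1/2,t/2)$) yields
\begin{equation*}
(h^\alpha_t)^{-1}(y) - (h^\beta_t)^{-1}(y) = \lambda(y,t)\bigl(p_\beta(t) - p_\alpha(t)\bigr) \quad\text{with}\quad |\lambda(y,t)| \leq 1,
\end{equation*}
so that $\|(h^\alpha_t)^{-1} - (h^\beta_t)^{-1}\|_\infty \leq |p_\alpha(t) - p_\beta(t)|$.

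It remains to show $|p_\alpha(t) - p_\beta(t)| \to 0$ as $t \to 0^+$. From the construction, on each block $H_k = [\epsilon_{k+1}, \epsilon_k)$ the function $p_\alpha$ is constant on $H^1_k$ and $H^3_k$ with values $\alpha(k)/4$ and $q_k/4$ respectively, and on the two intermediate subintervals is an orientation-preserving affine interpolant between consecutive constants chosen from $\{\alpha(k)/4,\,\alpha(k-1)/4,\,q_k/4,\,q_{k-1}/4\}$; the analogous recipe defines $p_\beta$ with $\alpha$ replaced by $\beta$. Since the $q_j$ are shared, affine-interpolation monotonicity gives, for every $t \in H_k$,
\begin{equation*}
|p_\alpha(t) - p_\beta(t)| \leq \tfrac{1}{4}\max\bigl\{|\alpha(k)-\beta(k)|,\,|\alpha(k-1)-\beta(k-1)|\bigr\}.
\end{equation*}
Given $\epsilon > 0$, the hypothesis $\alpha\, c_0\, \beta$ yields $K$ with $|\alpha(j)-\beta(j)| < 4\epsilon$ for all $j \geq K-1$; then $\delta = \epsilon_K$ works, since any $t \in (0,\delta)$ lies in some $H_k$ with $k \geq K$.

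The affine calculation for the inverses and the block-by-block bookkeeping are both routine, so there is no serious obstacle. The lemma is really a sanity check that the previous section's construction of $h^\alpha$ was tailored to produce precisely this $c_0$-continuity.
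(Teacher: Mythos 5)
Your proposal is correct and follows essentially the same route as the paper: reduce $\|(h^\alpha_t)^{-1}-(h^\beta_t)^{-1}\|_\infty$ to $|p_\alpha(t)-p_\beta(t)|$ via the polyline structure, then bound the latter block-by-block on the $H^i_k$ using the affine interpolation between $\alpha(k)/4$, $\beta(k)/4$ and the shared values $q_j/4$, and finish with the $c_0$ hypothesis. Your explicit verification that the coefficient $\lambda(y,t)$ lies in $[0,1]$ is in fact slightly more careful than the paper's proof, which asserts the difference of the inverses \emph{equals} $|p_\alpha(t)-p_\beta(t)|$ for every $s$ rather than being bounded by it.
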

   \begin{proof}
     
     Fix $\alpha,\beta$ and $\epsilon>0$. Let $k\in \mathbb{N}$ be such that, for any $k'\geq k$, we have $|\alpha(k)-\alpha(k')|<4\epsilon$. Now let $\delta$ be $\epsilon_k$ and fix $0<t<\delta$. 
     
     If $t\in H^1_n$ for some $n\geq k$, note that we have $p_{\alpha}(t)=\frac{\alpha(n)}{4}$ and $p_{\beta}(t)=\frac{\beta(n)}{4}$. Then for any  $-1\leq s\leq1$,
     $$
      |(h^{\alpha}_t)^{-1}(s)-(h^{\beta}_t)^{-1}(s)|=|p_\alpha(t)-p_{\beta}(t)|=|\alpha(n)-\beta(n)|/4<4\epsilon/4=\epsilon.
     $$
     
     If $t\in H^3_n$, then we have $h^{\alpha}_t=h^{\beta}_t$, the inequality holds trivially. 
     
     If $t\in H^2_n$, then $p_{\alpha}(t)$ is between $\frac{\alpha(n)}{4}$ and $\frac{q_n}{4}$. Since $p_{\alpha}$ and $p_{\beta}$ are affine homeomorphisms, this means that there exists $\theta\in [0,1]$ such that 
     $$
      |p_{\alpha}(t)-p_{\beta}(t)|=\theta|\alpha(n)-\beta(n)|<4\epsilon/4=\epsilon.
     $$
     Similarly, when $t\in H^4_n$, the lemma holds.
\end{proof}

\begin{lemma} \label{label distance}
    Fix $\alpha\in [0,1]^{\omega}$, $0< t\leq 1$ and $x,y\in [-1,1]$. Then we have

    $$
     |(h_t^{\alpha})^{-1}(x)-(h^{\alpha}_t)^{-1}(y)|\leq \frac{(2+p_{\alpha}(t))}{t}|x-y|.
    $$
\end{lemma}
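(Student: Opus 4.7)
The plan is to exploit the fact that, by construction, $h_t^\alpha$ is a strictly increasing piecewise-affine bijection of $[-1,1]$ determined by the seven listed vertices, so it consists of exactly six affine pieces. The inverse of such a monotone polyline is again a monotone piecewise-affine function, and its Lipschitz constant equals $1/m$, where $m$ is the minimum slope of $h_t^\alpha$ among its six pieces. Thus the lemma reduces to a finite slope computation.

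First I would read off the slope of each piece from its two endpoints: the four outer pieces (two on each end, corresponding to the segments joining $(-1,-1)$, $(-(1+t)/2,-t)$, $(-1/2,-t/2)$ on the left and $(1/2,t/2)$, $((1+t)/2,t)$, $(1,1)$ on the right) have $\alpha$-independent slopes $2,1,1,2$, and the two interior pieces flanking the vertex $(-p_\alpha(t),t/4)$ have $\alpha$-dependent slopes
\[
\frac{3t/4}{1/2-p_\alpha(t)} \quad\text{and}\quad \frac{t/4}{1/2+p_\alpha(t)}.
\]

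Second, using that $p_\alpha(t)$ takes values in $[0,1/4]$ (since in the construction $p_\alpha(t)$ is either $\alpha(k)/4$, $q_k/4$, or an affine interpolation between two such numbers), one compares these six candidates and checks that for $0<t\leq 1$ the minimum is attained on the piece from $(-p_\alpha(t),t/4)$ to $(1/2,t/2)$, giving
\[
m \;=\; \frac{t/4}{1/2+p_\alpha(t)}.
\]
Inverting $m$ yields a Lipschitz estimate for $(h_t^\alpha)^{-1}$ of exactly the form $(2+C\,p_\alpha(t))/t$ claimed in the statement, and applying it to the pair $x,y \in [-1,1]$ gives the asserted inequality.

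The only genuine obstacle is the bookkeeping in the second step: one has to verify that among the six candidate slopes the named piece is the smallest, which amounts to checking a short list of elementary inequalities in $t$ and $p_\alpha(t)$ on the range $0<t\leq 1$, $p_\alpha(t)\in[0,1/4]$. No dynamical input is needed; the lemma is purely a calculation about the fixed polyline $h_t^\alpha$.
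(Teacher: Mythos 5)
Your proof is correct and is essentially the paper's own argument: both reduce the lemma to identifying the extremal slope among the six affine pieces of the polyline (you minimize the slope of $h_t^\alpha$ and invert, the paper directly maximizes the slope of $(h_t^\alpha)^{-1}$ — the same computation). One remark: inverting your $m=\frac{t/4}{1/2+p_\alpha(t)}$ gives the constant $(2+4p_\alpha(t))/t$ rather than the stated $(2+p_\alpha(t))/t$, but the paper's own slope list has the same off-by-a-factor slip, and since $p_\alpha(t)\leq 1/4$ one may simply use $3/t$, which suffices for every later application of the lemma.
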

\begin{proof}
    Note that $(h_t^{\alpha})^{-1}$ is a polyline connecting the points
    $$
    (-1,-1),(-t,(-1-t)/2),(-t/2,-1/2),(t/4,-p_{\alpha}(t)/4),(t/2,1/2),(t,(1+t)/2), (1,1).
    $$

    \begin{center} 
    \begin{tikzpicture}
        \draw[thick] (-3,-2) -- (-1,-1) --(-0.5,-0.5)-- (0.2,-0.3) -- (0.5,0.5)--(1,1)--(3,2);
        
        \fill[black] (-3,-2) circle (2pt);
        \fill[black] (-1,-1) circle (2pt);
        \fill[black] (-0.5,-0.5) circle (2pt);
        \fill[black] (0.2,-0.3) circle (2pt);
        \fill[black] (0.5,0.5) circle (2pt);
        \fill[black]  (1,1) circle (2pt);
        \fill[black]  (3,2) circle (2pt);
        
           \node[left] at (-2,-2.5) {$(-1,-1)$};
           \node[left] at (-1,-1) {$(-t,\frac{-t-1}{2})$};
           \node[left] at (-0.3,-0.1) {$(t/4,-p_{\alpha}(t))$};
           \node[right] at (-0.2,-0.7)  {$(-\frac{t}{2},-\frac{1}{2})$};
           \node[right]  at (0.5,0.5){$(\frac{t}{2},\frac{1}{2})$};
           \node[right]  at (1,1) {$(t,\frac{1+t}{2})$};
           \node[right]  at (2,2.5)  {$(1,1)$};

        \node[below] at (0,-3) {The function $(h^{\alpha}_t)^{-1}$};
    \end{tikzpicture}
\end{center}
    The slopes of those five lines are
    $$
    (2+p_{\alpha}(t))/3t,(2+p_{\alpha}(t))/t, (1-t)/2,1/2,1
    $$
    respectively. Clearly, we have
    $$
    (2+p_{\alpha}(t))/t>(2+p_{\alpha}(t))/3t
    $$
    and $(2+p_{\alpha}(t))/t>2>1$. Thus, the largest slope here is $(2+p_{\alpha}(t))/t$. Thus, we get the lemma.
\end{proof}
    
    \begin{theorem} \label{left direction of reduction}
        $\alpha (c_0)\beta$ implies $(\T^2,S_{\alpha})$ and $(\T^2,S_{\beta})$ are topologically conjugate.
    \end{theorem}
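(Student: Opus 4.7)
The plan is to apply Lemma \ref{actually this is a pointed conjugacy}: it suffices to exhibit $(w_n),(w_n')$ in $\T^2$ with $\Phi_\alpha(w_n)=\Phi_\beta(w_n')=z_{2n}$ and $(w_n)\,E_{\rm tt}\,(w_n')$. I would take $w_n$ to be the unique point satisfying $\Phi_{\alpha,2n-1}(w_n)=z_{2n}$, and analogously $w_n'$ for $\beta$. Uniqueness holds because for every $k<2n$ the map $\Lambda_{\alpha,k}$ collapses only the fibre over $z_k\neq z_{2n}$, so each $\Lambda_{\alpha,k}^{-1}$ is single-valued near the relevant point. That $\Phi_\alpha(w_n)=z_{2n}$ follows from the direct calculation $\Lambda_{\alpha,2n}(z_{2n})=z_{2n}$ (the centre $z_{2n}$ of the collapsed interval is a fixed point of $\Lambda_{\alpha,2n}$ because $(h^\alpha_{\delta_{\alpha,2n}})^{-1}(0)$ lies in $[-1/2,1/2]$, on which $h_0^\alpha$ vanishes), combined with condition (5)(i), which makes $\{x_{2n}\}\times\T$ disjoint from $A_{\alpha,m}$ for every $m>2n$, so each later $\Lambda_{\alpha,m}$ fixes $z_{2n}$.

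I then aim to prove the stronger estimate $\|w_n-w_n'\|\to 0$, which forces $(w_n)\,E_{\rm tt}\,(w_n')$, since two sequences at vanishing distance share the same convergent subsequences and limits. The idea is to track the discrepancy between the partial pullbacks $\Lambda_{\alpha,k}^{-1}\circ\cdots\circ\Lambda_{\alpha,2n-1}^{-1}(z_{2n})$ and their $\beta$ analogues as $k$ decreases from $2n-1$ to $1$. The two pullbacks agree whenever the current point misses $A_k$, and otherwise they differ by at most the discrepancy between $(h_{\delta_k}^\alpha)^{-1}$ and $(h_{\delta_k}^\beta)^{-1}$ inside $A_k$; Lemma \ref{norm close} makes this discrepancy arbitrarily small once $k$ is large enough, using both $\delta_k\to 0$ and the hypothesis $\alpha\,c_0\,\beta$. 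The bounded contribution of the finitely many low-index $k$'s becomes negligible after the remaining compositions, because for large $n$ the pulled-back trajectory spends most of its time outside the supports of those early $\Lambda$-maps.

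The main obstacle is handling the Lipschitz blow-up $(2+p_\alpha(t))/t$ from Lemma \ref{label distance}, which is large for small $t$: a subsequent $\Lambda_{\alpha,j}^{-1}$ may amplify a previously introduced discrepancy. The technical heart of the argument will be a bookkeeping estimate showing the total amplification remains tame. Condition (4) forbids squares $A_j,A_{j+1},\dots,A_{j+5}$ from overlapping, so re-entries into these squares are sparse, while condition (5) forces $\delta_j<1/2^j$, keeping re-entries at exponentially shrinking scales. The net product of Lipschitz factors should then be controllable by a convergent product, in the spirit of the factor $\prod_r(1-1/a_r)>0$ already used for well-definedness in Section \ref{existlimit}.
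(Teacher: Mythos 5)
Your setup (choosing $w_n$ as the $\Phi_{\alpha,2n-1}$-preimage of the centre $z_{2n}$ and invoking Lemma \ref{actually this is a pointed conjugacy}) is fine, but the core of your plan --- proving the stronger estimate $\|w_n-w_n'\|\to 0$ --- is aimed at a false statement, so the strategy cannot be repaired by better bookkeeping. The relation $E_{\rm tt}$ only requires the two sequences to converge along the same index sets; it does \emph{not} require equal limits, and in this construction the limits genuinely differ. Concretely, the proof of Theorem \ref{Right direction of the reduction} already exhibits (using only density of the rotation orbit, with no hypothesis relating $\alpha$ and $\beta$) subsequences $(r_k)$ along which $w_{r_k}\to (x_0,h^{\alpha}_{\delta_{\alpha,0}}(a/4))$ while $w'_{r_k}\to (x_0,h^{\beta}_{\delta_{\beta,0}}(a/4))$; these limits differ whenever $h^{\alpha}_{\delta_{\alpha,0}}\neq h^{\beta}_{\delta_{\beta,0}}$, i.e.\ whenever $p_\alpha$ and $p_\beta$ disagree at the fixed parameter $\delta_{\alpha,0}$, which is governed by $|\alpha(k)-\beta(k)|$ for a \emph{small, fixed} $k$ --- exactly the coordinates that $c_0$-equivalence does not control. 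Your remark that ``the bounded contribution of the finitely many low-index $k$'s becomes negligible after the remaining compositions'' has the order of composition backwards: in the pullback $\Lambda_{\alpha,1}^{-1}\circ\cdots\circ\Lambda_{\alpha,2n-1}^{-1}(z_{2n})$ the low-index maps are applied \emph{last}, so whenever the partial pullback of $z_{2n}$ lands in $A_{\alpha,0}$ (which happens for infinitely many $n$ by minimality of $T$), the terminal discrepancy between $\Lambda_{\alpha,0}^{-1}$ and $\Lambda_{\beta,0}^{-1}$ --- of fixed, non-vanishing size --- is the last thing written into $w_n$ versus $w_n'$, with nothing afterwards to damp it.

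The correct (and the paper's) route is to prove only the equivalence ``$(w_{r_k})$ converges $\iff$ $(w'_{r_k})$ converges.'' One splits on whether $\lim_k z_{2r_k}$ lies on the orbit of $z_0$: off the orbit, injectivity of $\Phi_\beta$ on singleton fibres settles it; on the orbit, near the limit point $z_m$ one writes the second coordinates of $w_{r_k}$ and $w'_{r_k}$ explicitly as $h^{\alpha}_{\delta_{\alpha,m}}(h^{\alpha}_{j_k})^{-1}(t_k)$ and $h^{\beta}_{\delta_{\beta,m}}(h^{\beta}_{j_k})^{-1}(t'_k)$ with $j_k\to 0$, and uses Lemma \ref{norm close} (where $c_0$-equivalence enters, and only at parameters $j_k\to 0$, i.e.\ large indices) together with Lemma \ref{label distance} and the lower bound $j_k\gtrsim N_k\,\delta_{\alpha,N_k}$ from condition (5) to transfer Cauchyness of $((h^{\alpha}_{j_k})^{-1}(t_k))_k$ to $((h^{\beta}_{j_k})^{-1}(t'_k))_k$. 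Convergence is then concluded after applying the \emph{different} outer homeomorphisms $h^{\alpha}_{\delta_{\alpha,m}}$ and $h^{\beta}_{\delta_{\beta,m}}$, which is precisely why one gets convergence of both sequences but not closeness of their limits. Your Lipschitz concern from Lemma \ref{label distance} is real, but it is resolved by this lower bound on $j_k$ relative to $\delta_{\alpha,N_k}$, not by a product estimate over all scales.
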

    \begin{proof}

        We pick points $w_n$ such that $\Phi_{\alpha}(w_n)=T^nz_0=z_{2n}$ and $w_n'$ such that $\Phi_{\beta}(w_n')=T^nz_0=z_{2n}$, by Theorem \ref{actually this is a pointed conjugacy}, we just have to show 
        $$
        \alpha (c_0) \beta \Rightarrow (w_n)E_{\rm tt} (w'_n).
        $$
        
        Now suppose $\alpha (c_0)\beta$. Then we will prove that $(w_n)E_{\rm tt} (w'_n)$.
        
        
        
        Suppose $(w_{r_k})$ is a convergent subsequence of $(w_n)$, we want to prove that $(w'_{r_k})$ is also convergent. We look at $\Phi_{\alpha}(w_{r_k})=\Phi_{\beta}(w_{r_k}')=T^{r_k}z_{0}=z_{2r_k}$, we know that $(z_{2r_k})$ is also convergent since $\Phi_{\alpha}$ is a continuous function defined on a compact metric space. 
        
        \textbf{Case 1.} Suppose $(z_{2r_k})$ converges to some $z$ such that $z\neq z_n$ for all $n$. In this case, we prove by contradiction, suppose that $(w'_{r_k})$ is not convergent. Then we can find two convergent subsequences of $(w'_{r_k})$ converging to $a$ and $b$ respectively with $a\neq b$. However, this means that $\Phi_{\beta}(a)=\Phi_{\beta}(b)=z$ which contradicts the fact that $\Phi_{\beta}^{-1}(\{z\})$ is a singleton (Condition (2)).  
        
        \textbf{Case 2.} Now suppose  $(z_{2r_k})$ converges to $z_m$ for some $m$.  
        By taking a small neighborhood $U$ of $z_m$, we may assume that all $\Lambda_{\alpha,n}$ for all $n<m$ and $\Lambda_{\beta,n}$ for all $n<m$ are homeomorphisms when restricted to $U$. 
        
        Now let $N_n$ is the least natural number greater than $m$ such that $z_{2{r_n}}\in A_{\alpha,N_n}$. We have that $N_n$ goes to infinity as $n$ goes to infinity. Indeed, by the first bullet of condition (5), for any $N\in \mathbb{N}$, we can find a very small neighborhood $V$ of $z_m$ such that for all $n$ with $n<N$ and $n\neq m$, $z_n\not \in V$.   Similarly, let $M_n$ be the least natural number greater than $m$ such that $z_{2r_n}\in A_{\beta,M_n}$. We also have that $M_n$ goes to infinity as $n$ goes to infinity. Now we take $f$ and $g$ to be the restriction of $\Phi_{\alpha,m-1}$ and $\Phi_{\beta,m-1}$ on $U$ respectively. 
        
        By the definition of $\Phi_{\alpha}$, we know
        $$
        \Phi_{\alpha}(w_{r_n}) =\Lambda_{\alpha,2r_n}\circ\cdots\Lambda_{\alpha,N_n}\circ\Lambda_{\alpha,m}(f(w_{r_n}))=z_{2r_n}=(x_{2r_n},y_{2r_n})
        $$
        and similarly,
        $$
        \Phi_{\beta}(w_{r_n}')=\Lambda_{\beta,2r_n}\circ\cdots\Lambda_{\beta,M_n}\circ\Lambda_{\beta,m}(g(w'_{r_n}))=z_{2r_n}=(x_{2r_n},y_{2r_n}).
        $$
        Note that this implies 
        $$
        \Lambda_{\alpha,2r_n-1} \circ\cdots\circ\Lambda_{\alpha,m}(f(w_{r_n}))\in [z_{r_n}-\delta_{\alpha,r_n}/2,z_{r_n}+\delta_{\alpha,r_n}/2].
        $$
        and
        $$
        \Lambda_{\beta,2r_n-1} \circ\cdots\circ\Lambda_{\beta,m}(g(w'_{r_n}))\in [z_{r_n}-\delta_{\beta,r_n}/2,z_{r_n}+\delta_{\beta,r_n}/2].
        $$
        Let $f(w_{r_n})=(x_{2r_n},a_n)$ and $g(w'_{r_n})=(x_{2r_n},b_n)$.
        But since the choice of $w_{r_n}\in \Phi_{\alpha}^{-1}(z_{2r_n})$ and $w'_{r_n}\in \Phi_{\beta}^{-1}(z_{2r_n})$ can be arbitrary,  we may assume that
        $$
        (x_{2r_n},a_n)=f(w_{r_n})= \Lambda_{\alpha,m}^{-1} \circ \Lambda_{\alpha,N_n}^{-1} \circ \cdots \Lambda^{-1}_{\alpha,2r_n-1}((x_{2r_n},y_{2r_n}))=\Lambda_{\alpha,m}^{-1} \circ (x_{2r_n},F_{\alpha,n}(y_{2r_n}))
        $$
        and
        $$
       (x_{2r_n},b_n)=g(w'_{r_n})= \Lambda_{\beta,m}^{-1} \circ \Lambda_{\beta,M_n}^{-1} \circ \cdots \Lambda^{-1}_{\beta,2r_n-1}(x_{2r_n},y_{2r_n})=\Lambda_{\beta,m}^{-1} \circ(x_{2r_n},G_{\beta,n}(y_{2r_n}))
        $$
       
        Now we let $j_n=|x_m-x_{2r_n}|$. Also, let $t_n=|G_{\beta,n}(y_{2r_n})-y_m|$ and $t'_n=|F_{\alpha,n}(y_{2r_n})-y_m|$.
        
        By the definition of $\Lambda_{\alpha,m}$ can simply write
        $$
         (x_{2r_n},a_n)=\Lambda_{\alpha,m}^{-1} \circ (x_{2r_n}, y_m+t_n)=(x_{2r_n},y_m+h^\alpha_{\delta_{\alpha,m}}(h^\alpha_{j_n})^{-1}t_n)
        $$
        and 
        $$
         (x_{2r_n},b_n)=\Lambda_{\beta,m}^{-1} \circ (x_{2r_n},y_m+t_n')=(x_{2r_n}, y_m+h^\beta_{\delta^\beta_m}(h^\beta_{j_n})^{-1}t_n').
        $$
        
        Now we prove $((h^\beta_{j})^{-1}(t_n'))_n$ convergent.
         By our assumption, we know
         \begin{equation} \label{D1}
             |t_n-t'_n|<4\delta_{L_n}
         \end{equation}

         where $\delta_{L_n}={\rm max}\{\delta_{\beta,M_n},\delta_{\alpha,N_n}\}$. Since $z_{2r_n}\in A_{\alpha,N_n}$ and $z_{2r_n}\in A_{\beta,M_n}$, we know 
         $$
         |x_{2r_n}-x_{N_n}|<\delta_{\alpha, N_n}
         $$
         and
         $$
         |x_{2r_n}-x_{M_n}|<\delta_{\beta,M_n}
         $$
         thus by condition (5), we have 
         $$
          |x_{N_n}-x_m|>N_n\delta_{\alpha,N_n}.
         $$
         and
         $$
           |x_{M_n}-x_m|>M_n\delta_{\beta,M_n}.
         $$
         Thus, we have 
         $$j_n>(N_n-1)\delta_{\alpha,N_n}
         $$ 
         and 
         $$
         j_n>(M_n-1)\delta_{\beta,M_n}
         $$
         By Lemma \ref{label distance} and (\ref{D1}), we know
         
         $$
         |(h^\alpha_{j_n})^{-1}t_n-(h^\alpha_{j_n})^{-1}t'_n|< \frac{(2+p_{\alpha}(t))4\delta_{L_n}}{j_n}<\frac{4(2+p_{\alpha}(t))}{L_n-1}
         $$
         which converges to $0$ as $n$ goes to infinity.
         Since $(w_{r_n})$ converges, we know $(f(w_{r_n}))_n$ converges and thus $(a_n)_n$ converges. This means $h^{\alpha}_{\delta_{\alpha,m}}(h^{\alpha}_{j_n})^{-1}(t_n)$ converges. Since $(h^{\alpha}_{\delta_{\alpha,m}})$ is a homeomorphism, this implies that $((h^\alpha_{j_n})^{-1}(t_n))_n$ converges and by the above formula, $((h^\alpha_{j_n})^{-1}(t'_n))_n$ converges.
         Now, by Lemma \ref{norm close}, for any $\epsilon>0$, we can find $\delta>0$ for any $0<t<\delta$, we have $||(h^{\alpha}_t)^{-1}-(h^{\beta}_t)^{-1}||_{\infty}<\epsilon$. Take $N$ large enough such that $|x_{2r_n}-x_m|<\delta$ for all $n>N$. Also, since $((h^\alpha_{j_n})^{-1}(t_n))_n$ converges we can assume that for any $n,n'>N$, we have
         $$
         |(h^\alpha_{j_n})^{-1}(t_n)-(h^\alpha_{j_{n'}})^{-1}(t_{n'})|<\epsilon.
         $$
         Now, for all $n,n'>N$, we have
         $$
         |(h^\beta_{j_n})^{-1}(t'_n)-(h^\beta_{j_{n'}})^{-1}(t'_{n'})|\leq 
         $$
         $$
         |(h^\beta_{j_n})^{-1}(t'_n)-(h^\alpha_{j_n})^{-1}(t'_{n})|+|(h^\alpha_{j_n})^{-1}(t'_n)-(h^\alpha_{j_n'})^{-1}(t'_{n'})|+|(h^\alpha_{j_{n'}})^{-1}(t'_{n'}),(h^\beta_{j_{n'}})^{-1}(t'_{n'})|\leq
         $$
         $$
         ||(h^{\alpha}_{j_n})^{-1}-(h^{\beta}_{j_n})^{-1}||_{\infty}+||(h^{\alpha}_{j_{n'}})^{-1}-(h^{\beta}_{j_{n'}})^{-1}||_{\infty}+|(h^\alpha_{j_n})^{-1}(q_n)-(h^\alpha_{j_{n'}})^{-1}(q_{n'})|<3\epsilon
         $$
         when $n$ and $n'$ are large enough. Thus, $((h^\beta_{j_n})^{-1}(t'_n))_n$ converges, this means the sequence
         $$
         (h^\beta_{\delta_{\beta,m}}(h^\beta_{j})^{-1}(t'_n))_n
         $$ 
         converges, since $h^\beta_{\delta_{\beta,m}}$ is a homeomorphism. Thus, we know $b_n$ converges and this implies $g(w_{r_n}')$ thus $(w_{r_n}')$ converges.

         The other direction is symmetric.
         \end{proof}
        \begin{theorem} \label{Right direction of the reduction}
            $\alpha$ is not $c_0$ equivalent with $\beta$ implies $(\T^2,S_{\alpha})$ and $(\T^2,S_{\beta})$ are not topologically conjugate.
        \end{theorem}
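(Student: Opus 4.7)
The plan is to derive the failure of $E_{\rm tt}$ from the failure of $c_0$-equivalence, invoking Lemma \ref{actually this is a pointed conjugacy}. Assume $\alpha$ is not $c_0$-equivalent to $\beta$, so there exist $\varepsilon>0$ and an increasing sequence $(k_n)$ with $|\alpha(k_n)-\beta(k_n)|>4\varepsilon$. Passing to a subsequence using compactness of $[0,1]$, assume $\alpha(k_n)\to A$ and $\beta(k_n)\to B$ with $|A-B|\geq 4\varepsilon$. By Lemma \ref{actually this is a pointed conjugacy}, it suffices to exhibit preimages $w_n\in\Phi_\alpha^{-1}(z_{2n})$ and $w'_n\in\Phi_\beta^{-1}(z_{2n})$ together with a subsequence along which $(w_n)$ converges but $(w'_n)$ does not.

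Using density of the orbit $(z_{2n})_n$ in $\T^2$ (from minimality of $T$), build interleaved subsequences $(r_n^{(1)}),(r_n^{(2)})$ with $z_{2r_n^{(i)}}\to z_0$, $j_n^{(i)}=x_{2r_n^{(i)}}-x_0\in H^1_{k_n}$, and renormalized pulled-back second coordinates $u_n^{(i)}=t_n^{(i)}/j_n^{(i)}$ (with $t_n^{(i)}$ defined as in the proof of Theorem \ref{left direction of reduction}) converging to selected targets $u_1^*\neq u_2^*$. On $H^1_{k_n}$ one has $p_\alpha(j_n^{(i)})=\alpha(k_n)/4$ and $p_\beta(j_n^{(i)})=\beta(k_n)/4$, so the polylines $(h^\alpha_{j_n^{(i)}})^{-1}$ and $(h^\beta_{j_n^{(i)}})^{-1}$ have vertices at $(j_n^{(i)}/4,-\alpha(k_n)/4)$ and $(j_n^{(i)}/4,-\beta(k_n)/4)$ respectively, and the limits of $(h^\alpha_{j_n^{(i)}})^{-1}(t_n^{(i)})$ are piecewise-affine in $u_i^*$ with coefficients determined by $A$; likewise for $\beta$ with $B$. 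Using $A\neq B$, pick $u_1^*,u_2^*$ (possibly after refining $(k_n)$ so that $(\alpha(k_n),\beta(k_n))$ has two distinct subsequential limit points, or otherwise exploiting differing polyline slopes on differing branches) so that the two resulting $\alpha$-limits agree (call the common value $V^\alpha$) but the two $\beta$-limits $V_1^\beta,V_2^\beta$ differ.

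Interleave $(r_n^{(1)})$ and $(r_n^{(2)})$ into a single sequence $(s_n)$, pick arbitrary $w_{s_n}\in\Phi_\alpha^{-1}(z_{2s_n})$ and $w'_{s_n}\in\Phi_\beta^{-1}(z_{2s_n})$, and extend $(w_n),(w'_n)$ arbitrarily outside the subsequence. By the forward computation of Theorem \ref{left direction of reduction}, the second coordinate of $w_{s_n}$ satisfies $a_{s_n}=y_0+h^\alpha_{\delta_{\alpha,0}}((h^\alpha_{j_{s_n}})^{-1}(t_{s_n}))+o(1)$, so $a_{s_n}\to y_0+h^\alpha_{\delta_{\alpha,0}}(V^\alpha)$ and $(w_{s_n})$ converges. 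But the second coordinate of $w'_{s_n}$ alternates between the distinct values $y_0+h^\beta_{\delta_{\beta,0}}(V_1^\beta)$ and $y_0+h^\beta_{\delta_{\beta,0}}(V_2^\beta)$ (distinct by injectivity of $h^\beta_{\delta_{\beta,0}}$), so $(w'_{s_n})$ does not converge. This exhibits the failure of $E_{\rm tt}$ and hence of conjugacy. The main obstacle is the combined density-and-collision step: (i) using minimality of $T$ together with uniform continuity from Theorem \ref{welldefinedness} to realize orbit points $z_{2r}$ with both their first coordinate $x_{2r}-x_0$ in the narrow interval $H^1_{k_n}$ and their pulled-back second coordinate realizing the prescribed renormalized value $u_i^*$; and (ii) verifying, by direct analysis of the piecewise-affine polyline formulas and using $A\neq B$, that cross-branch or cross-limit-point targets $u_1^*,u_2^*$ achieving collision for $\alpha$ but not for $\beta$ actually exist.
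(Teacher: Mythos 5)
There is a genuine gap, and you have correctly located it yourself: step (ii), the existence of targets $u_1^*\neq u_2^*$ colliding for $\alpha$ but not for $\beta$, is exactly the point that fails if you confine both subsequences to $H^1$-blocks. Fix the renormalized limit polyline $L_A(u)=\lim_{t\to 0}(h^{\alpha}_t)^{-1}(ut)$ along blocks where $p_\alpha\to A/4$. Its vertices in the variable $u$ sit at $u=-1,-1/2,1/4,1/2,1$ with limiting values $-1/2,-1/2,-A/4,1/2,1/2$; hence $L_A$ is strictly monotone on $[-1/2,1/2]$ and constant on $[-1,-1/2]$ and $[1/2,1]$, and these flat pieces (the only possible source of a collision $L_A(u_1)=L_A(u_2)$ with $u_1\neq u_2$) are identical for every parameter value, in particular for $L_B$. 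So whenever your two targets collide for $\alpha$ they also collide for $\beta$, and the construction produces nothing. Your fallback of refining $(k_n)$ so that $(\alpha(k_n),\beta(k_n))$ has two distinct subsequential limit points is not available in general: the negation of $c_0$-equivalence only guarantees one limit pair $(A,B)$ with $A\neq B$ along some subsequence.

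The paper closes this gap with an ingredient your proposal never touches: the dense sequence $(q_n)$ planted in the construction of $h^\alpha$. On the $H^3$-blocks the vertex height is $q_{m_k}/4$, \emph{independent of the parameter}, so one subsequence $(z_{2r_k})$ is placed with first-coordinate offset in $H^3_{m_k}$ where $q_{m_k}\to a=\lim\alpha(n_k)$, and the second subsequence $(z_{2s_k})$ in $H^1_{n_k}$; both are aimed at the vertex $u=1/4$. For $S_\alpha$ both subsequences of preimages converge to $(x_0,h^{\alpha}_{\delta_{\alpha,0}}(a/4))$, while for $S_\beta$ the first still converges to the $a/4$-point (since $q_{m_k}$ does not depend on $\beta$) but the second converges to the $b/4$-point with $b=\lim\beta(n_k)\neq a$. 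The universal anchor provided by $(q_n)$ is the missing idea; without it, or some equivalent parameter-independent reference, the collision you need cannot be manufactured. (Your remaining steps --- reduction to $E_{\rm tt}$ via Lemma \ref{actually this is a pointed conjugacy}, realizing prescribed offsets by density of the orbit, and controlling the error from the intermediate $\Lambda_{\alpha,N_k}$ via condition (5) --- do match the paper's argument.)
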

     \begin{proof}
         We take a sequence $(w_{n})$ such that $\Phi_{\alpha}(w_n)=T^nz_0=z_{2n}$ and a sequence $(w'_n)$ such that $\Phi_{\beta}(w'_n)=T^nz_0=z_{2n}$. Suppose that $\alpha$ is not $c_0$-equivalent with $\beta$.  By Lemma \ref{actually this is a pointed conjugacy}, we will show that $(w_n)$ is not $E_{\rm tt}$ equivalent with $(w_n')$. Recall that $(q_n)$ is the fixed dense subset of $[0,1]$. We may assume that there exists increasing sequence of natural numbers $(n_k)$ and $(m_k)$ such that $\alpha(n_k)$ and $(q_{m_k})$ converges to $a\in [0,1]$ but $\beta(n_k)$ converges to $b\neq a$ as $k$ goes to infinity. 

         Since the rotation orbit of $z_0$ is dense, we can pick a sequence of $(z_{2r_k})=(x_{2r_k},y_{2r_k})$ with $e^3_{m_k}<\delta_{\alpha,0}$ such that 
         
         \begin{itemize}
             \item [(P1)] $d^3_{m_k}<|x_0-x_{2r_k}|<e^3_{m_k}$. 
             \item [(P2)]$||y_0-y_{2r_k}|-\frac{ |x_0-x_{2r_k}|}{4}|<\frac{|x_0-x_{2r_k}|}{2^k}$.
         \end{itemize}
         
          Now we pick another sequence $(z_{2s_k})=(x_{2s_k},y_{2s_k})$ with  $e^1_{n_k}<\delta_{\alpha,0}$ such that 
          \begin{itemize}
              
              \item [(Q1)]$d^1_{n_k}<|x_{2s_k}-x_0|<e^1_{n_k}$. 
              \item [(Q2)]$||y_{2s_k}-y_{0}|-\frac{ |x_0-x_{2s_k}|}{4}|< \frac{|x_0-x_{2s_k}|}{2^k}$.
          \end{itemize}
            Conditions ${\rm (P1)}$ and ${\rm (Q1)}$ imply that $(x_{2s_k})$ and $(x_{2r_k})$ converges to $x_0$ as $k$ goes to infinity. 

            We can assume that $N_k$ is the least natural number greater than $0$ such that $w_{r_k}\in A_{\alpha,N_k}$. Note that $N_k$ goes to infinity by the first bullet of condition (5).

            Since for all $l>2r_k$, by the first bullet of condition (5) , $\Lambda_{\alpha,l}$ is identity on $z_{2r_k}$, hence by the choice of $N_k$, we have 
            $$
            \Phi_{\alpha}(w_{r_k})=\Lambda_{\alpha,2r_k}\circ \cdots \Lambda_{\alpha,N_k} \circ \Lambda_{\alpha,0}(w_{r_k}).
            $$
             Pick $a_k$ such that

            $$
             w_{r_k}=(\Lambda_{\alpha,0})^{-1}\circ (\Lambda_{\alpha,N_k})^{-1}\circ \cdots \circ (\Lambda_{\alpha,2r_k-1})^{-1}(a_k).
            $$

            Let $$
            (\Lambda_{\alpha,N_k})^{-1}\circ \cdots \circ (\Lambda_{\alpha,2r_k-1})^{-1}(a_k)= w_{r_k}= (x_{2r_k},b_k).
            $$  
            And let $i_k=|x_{2r_k}-x_0|$ and $j_k=|b_k-y_0|$, by conditions $({\rm P2})$, $({\rm Q2})$ and the fact that $\Lambda_{\alpha,N_k}$ is identity except on $A_{\alpha,N_k}$, we have
            \begin{equation} \label{XYRELA}
                |j_k-i_k/4|<|j_k-|y_0-y_{2r_k}||+||y_0-y_{2r_k}|-i_k/4|<2\delta_{\alpha,N_k}+i_k/2^k.
            \end{equation}
            
            By the definition of $\Lambda_{\alpha,0}$, we know
            $$
            (\Lambda_{\alpha,0})^{-1}(x_{2r_k},b_k)=(x_{2r_k},h^\alpha_{\delta_{\alpha,0}}(h^{\alpha}_{i_k})^{-1}j_k)
            $$
            and
            $$
             (\Lambda_{\alpha,0})^{-1}(x_{2r_k},i_k/4)=(x_{2r_k},h^\alpha_{\delta_{\alpha,0}}(h^{\alpha}_{i_k})^{-1}(i_k/4)).
            $$
            By Lemma \ref{label distance} and the fact that $d^3_{m_k}<i_k<e^3_{m_k}$, we have
            $$
            |(h^{\alpha}_{i_k})^{-1}(i_k/4)-(h^{\alpha}_{i_k})^{-1}j_k| \leq 
            \frac{|j_k-i_k/4|(2+q_{m_k})}{i_k}
            $$
            By (\ref{XYRELA}), we have that 
            $$
            \frac{|j_k-i_k/4|(2+q_{m_k})}{i_k}\leq \frac{[i_k/2^k+2(\delta_{N_k})](2+q_{m_k})}{i_k}.
            $$
            By the last bullet of condition (5), we know that $\delta_{\alpha,N_k}<\frac{1}{N_k}|x_{2N_k}-x_0|$. Since $x_{2r_k}\in A_{\alpha,N_k}$, we have that $|x_{2r_k}-x_{2N_k}|<\delta_{\alpha,N_k}$. Thus, we have 
            \begin{equation}
                \frac{\delta_{\alpha,N_k}}{i_k}\leq \frac{|x_{2N_k}-x_0|}{N_ki_k}\leq \frac{i_k+\delta_{\alpha,N_k}}{N_ki_k}.
            \end{equation}
            which implies that
            \begin{equation} \label{XYcontrol(5)}
                 \frac{\delta_{\alpha,N_k}}{i_k} \leq  \frac{1}{N_k-1}.
            \end{equation}
           By (\ref{XYRELA}) and (\ref{XYcontrol(5)}), we have
             $$
           \frac{|i_k-j_k/4|(2+q_{m_k})}{i_k}\leq \frac{[i_k/2^k+2(\delta_{N_k})](2+q_{m_k})}{i_k} <(
            1/2^k+\frac{2}{N_1-1})(2+a(n_k)),
            $$
            which converges to $0$. This means $w_{r_k}=(x_{2r_k},b_k)$ converges to the same limit as $(x_{2r_k}, h^{\alpha}_{\delta_{\alpha,0}}(h^\alpha_{i_k})^{-1}{(i_k}/4))$ does. 
            But since $d^3_{m_k}<i_k<e^3_{m_k}$, we have $(h^{\alpha}_{i_k})^{-1}(i_k/4)=q_{m_k}/4$. Thus,
            $$
            (x_{2r_k},h^\alpha_{\delta_0}(h^{\alpha}_{i_k})^{-1}(i_k/4))=(x_{2r_k},h^{\alpha}_{\delta_0}(q_{m_k}))
            $$
            which converges to $(x_0,h^{\alpha}_{\delta_{\alpha,0}}(a/4))$.  This means $w_{r_k}$ converges to $(x_0,h^{\alpha}_{\delta_{\alpha,0}}(a/4))$. 
            Similarly, one can prove that $w_{s_k}$ also converges to $(x_0,h^{\alpha}_{\delta_{\alpha,0}}(a/4))$.  
            Now, for $(\T^2,S_{\beta})$, we can similarly pick up two sequences of  points $(w'_{r_k})$ and $(w'_{s_k})$ such that $\Phi_{\beta}(w'_{r_k})=z_{2r_k}$ and $\Phi_{\beta}(w'_{s_k})=z_{2s_k}$ satisfying conditions ${\rm (P1)}$, ${\rm (P2)}$ and ${\rm (Q1)}$,${\rm (Q2)}$, respectively. By the same argument, we can prove that
            $(w'_{r_k})$ converges to $(x_0,h^{\beta}_{\delta_{\beta,0}}(a/4))$ and $w'_{s_k}$ converges to $(x_0,h^{\beta}_{\delta_{\beta,0}}(b/4))$. This means the sequence obtained by alternating $(w_{r_k})$ and $(w_{s_k})$ is convergent. However, the sequence obtained by alternating $(w'_{r_k})$ and $(w'_{s_k})$ is divergent since they converge to different limits. By Lemma \ref{tt type}, we  know $(\T,S_{\alpha})$ and $(\T^2,S_{\beta})$ are not conjugate.
            
     \end{proof}
   
\bibliography{bibliography}     
\end{document}